
\documentclass[leqno,10pt]{article}
\usepackage{a4wide,amsmath,amssymb,amsfonts,mathrsfs,exscale,graphics,amsthm,colonequals,comment,ifthen,url,breakurl,enumerate,etoolbox}

\usepackage[curve,matrix,arrow,cmtip]{xy}
\newdir^{ (}{{}*!/-3pt/\dir^{(}}   
\newdir_{ (}{{}*!/-3pt/\dir_{(}}   
\NoComputerModernTips

\newtoggle{thesis}
\togglefalse{thesis}

\numberwithin{equation}{section}

\theoremstyle{plain}
\newtheorem{theorem}[equation]{Theorem}

\newtheorem{lemma}[equation]{Lemma}

\newtheorem{proposition}[equation]{Proposition}

\theoremstyle{definition}
\newtheorem{definition}[equation]{Definition}

\newtheorem{construction}[equation]{Construction}
\newtheorem{remark}[equation]{Remark}
\newtheorem{example}[equation]{Example}

\DeclareMathOperator{\G}{G}

\DeclareMathOperator{\Img}{im}

\let\into\hookrightarrow

\newcommand{\defeq}{\colonequals}

\newcommand{\Gm}[1][\empty]{
  \ifthenelse{\equal{#1}{\empty}}
    {\mathbb{G}_m}
    {\mathbb{G}_{m,#1}}}

 \newcommand{\Gred}[1][\empty]{
  \ifthenelse{\equal{#1}{\empty}}
    {G^{\text{red}}}
    {G^{\text{red},#1}}}

 \newcommand{\Rep}[1][\empty]{
  \ifthenelse{\equal{#1}{\empty}}
    {\mathop{\text{\tt Rep}}\nolimits}
    {\mathop{\text{$#1$-{\tt Rep}}}\nolimits}}

\DeclareMathOperator{\gr}{gr}
\DeclareMathOperator{\rk}{rk}




\newcommand\toover[1]{\mathrel{\smash{\overset{#1}{\to}}}}
\newcommand\varto[1]{\mathrel{\hbox to #1pt{\rightarrowfill}}}



\renewcommand{\implies}{\Rightarrow}


\newcommand{\BQ}{{\mathbb{Q}}}

\newcommand{\BZ}{{\mathbb{Z}}}

\newcommand{\CI}{{\mathcal I}}

\newcommand{\CO}{{\mathcal O}} 
\newcommand{\CP}{{\mathcal P}}

\newcommand{\CZ}{{\mathcal Z}}

\newcommand{\Gscr}{{\mathscr G}}

\newcommand{\leftexp}[2]{{\vphantom{#2}}^{#1}{#2}}

\let\phi\varphi

\DeclareMathOperator{\GL}{GL}

\begin{document}
\title{$p$-kernels occurring in an isogeny class of $p$-divisible groups}

\author{Paul Ziegler\footnote{Imperial College London,
 {\tt p.ziegler@imperial.ac.uk}}
}


\maketitle
\abstract{We give a criterion which allows to determine, in terms of the combinatorics of the root system of the general linear group, which $p$-kernels occur in an isogeny class of $p$-divisible groups over an algebraically closed field of positive characteristic. As an application we obtain a criterion for the non-emptiness of certain affine Deligne-Lusztig varieties associated to the general linear group.}

\section{Introduction}

This article studies the relationship between two invariants of a $p$-divisible group $\Gscr$ over an algebraically closed field of characteristic $p>0$: The first is the isogeny class of $\Gscr$ which is encoded in its Newton polygon and the second is the isomorphism class of the kernel of multiplication by $p$ on $\Gscr$. Once certain numerical invariants of $\Gscr$ are fixed, both these invariants can only take on finitely many values. In this article, we give a computable criterion, in terms of the combinatorics of the root system of the general linear group, which determines which pairs of these invariants can occur together for some $\Gscr$. That is we determine which $p$-kernels can occur in any isogeny class of $p$-divisible groups. We also consider the analogous question in equal characteristic.

This question is motivated by our interest in the stratifications of suitable moduli spaces of abelian varieties or $p$-divisible groups obtained by decomposing these spaces according to the two invariants described above. For example, on a Rapoport-Zink space (c.f. \cite{RapoportZink}), one can define the Ekedahl-Oort stratification by decomposing the space according to the isomorphism class of the $p$-kernel of the universal $p$-divisible group and our criterion allows to determine which of these strata are non-empty. Similarly, on a moduli spaces of abelian varieties with suitable extra structure in positive characteristic, one obtains two stratifications, the Newton polygon stratifications and the Ekedahl-Oort stratification and we would like to understand which strata of these two stratifications intersect each other. However, in this context one encounters not just $p$-divisible groups, but $p$-divisible groups with additional structure such as a pairing. For applications to such stratifications it would thus be necessary to obtain generalizations of the results of this article for $p$-divisible groups with such additional structure. It seems natural to expect that in such a setting the analogues of our results should hold with the group $\GL_h$ replaced by an arbitrary reductive group. The author intends to treat this question in a follow-up article.

As an another application of our results, in Section \ref{DLSection} we give a criterion for the non-emptiness of affine Deligne-Lusztig varieties for the group $\GL_h$ in the situation where the involved Hodge cocharacter is minuscule.
\medskip

Throughout, we work with Dieudonn\'e modules instead of $p$-divisible groups. We work over a fixed algebraically closed field $k$ of characteristic $p$ and work either over the Witt ring $\CO=W(k)$ or $\CO=k[[t]]$ whose uniformizer $p$ or $t$ we denote by $\epsilon$. We use the following language: A Dieudonn\'e module is a finite free module over $\CO$ together with suitably semilinear endomorphisms $F$ and $V$ satisfying $FV=FV=\epsilon$. A $1$-truncated Dieudonn\'e module is a finite-dimensional vector space over $k$ together with suitably semilinear endomorpism $F$ and $V$ satisfying $\ker F=\Img V$ and $\Img F=\ker V$. To each Dieudonn\'e module $M$ one can associate its truncation $M/\epsilon M$. By a lift of a $1$-truncated Dieudonn\'e module $Z$ we mean a Dieudonn\'e module $M$ together with an isomorphism $M/\epsilon M\cong Z$. To each Dieudonn\'e module $M$ we associate the Newton polygon obtained via covariant Dieudonn\'e theory. Then we answer the above question by determining for a given $1$-truncated Dieudonn\'e module $Z$ and Newton polygon $\CP$ whether there exists a lift of $Z$ with Newton polygon $\CP$. 

\par
For the sake of simplicity, in this introduction we restrict ourselves to the case that $\CP$ is the straight Newton polygon with slope $n/(n+m)$ and endpoint $(n+m,n)$ for some non-negative coprime integers $n$ and $m$. For the result for arbitrary Newton polygons see Theorem \ref{MainTheorem}. To state our result, we will need the following: 

Let $h\defeq n+m$ and $G\defeq \GL_{h,\CO}$. Let $T\subset G$ be the torus of diagonal matrices and $B\subset G$ the Borel subgroup of upper triangular matrices. Let $W\cong S_n$ be the Weyl group of $G$ with respect to $T$ and $S=\{(i,i+1)\mid 1\leq i\leq h-1\}$ the generating system of $W$ induced by $B$. Let $\mu\in X_*(T)$ be the cocharacter $t\mapsto (t,\hdots,t,1,\hdots,1)$ where $t$ occurs with multiplicity $m$. Let $I$ be the type $S\setminus \{(m,m+1)\}$. We denote by $W_I\subset W$ the subgroup generated by $I$ and by $\leftexp{I}{W}\subset W$ the set of left reduced elements with respect to $W_I$. There exists a natural bijection between isomorphism classes of $1$-truncated Dieudonn\'e modules $Z$ satisfying $\rk_k Z=h$ and $\rk_kF(Z)=n$ and elements of $\leftexp{I}{W}\subset W$ (c.f. Subsection \ref{TruncClassSect}). For $w\in \leftexp{I}{W}$ we denote the corresponding $1$-truncated Dieudonn\'e module by $Z_w$.

Let $\CI\subset G(\CO)$ be the preimage of $B(k)$ under the projection $G(\CO)\to G(k)$. Let $\tilde W$ be the extended Weyl group of $G$. We denote the canonical inclusion $X_*(T)\into \tilde W$ by $\lambda\mapsto \epsilon^\lambda$. For $\lambda\colon t\mapsto (t^{\lambda_1},\hdots,t^{\lambda_h}) \in X_*(T)$ we let $\eta_{\lambda}$ be the unique permutation $\eta\in W$ such that $\lambda_{\eta(1)}\leq \hdots \lambda_{\eta(h)}$ and $\eta(i)\leq \eta(i')$ for any $i\leq i'$ such that $\lambda_i=\lambda_{i'}$. Finally, we let $x_{n,m}\in \tilde W$ be the matrix of Frobenius on the minimal Dieudonn\'e module $H_{n,m}$ (c.f. Definition \ref{xnmDef}). Then our result is:
\begin{theorem}[{c.f. Theorem \ref{MainTheorem}}] \label{MainTheoremIntro}
  Let $w\in \leftexp{I}{W}$. The following are equivalent:
  \begin{enumerate}[(i)]
  \item The $1$-truncated Dieudonn\'e module $Z_w$ admits a lift with Newton polygon $\CP$.
  \item There exist $\lambda\in X_*(T)$ satisfying $\epsilon^{-\lambda}x_{n,m}\epsilon^\lambda\in W\epsilon^\mu W$ as well as $y\in W$ such that $ww_0w_{0,I}\epsilon^\mu\in \CI y\CI \eta_\lambda^{-1}\epsilon^{-\lambda}x_{n,m}\epsilon^\lambda\eta_\lambda \CI y^{-1} \CI$.
  \end{enumerate}
\end{theorem}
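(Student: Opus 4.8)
The plan is to translate, via covariant Dieudonn\'e theory, the existence of a lift of $Z_{w}$ into a statement about lattices in a single fixed isocrystal, and then to reformulate both the Newton--polygon condition and the $p$-kernel condition inside the loop group of $G=\GL_{h}$.

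First, set $L\defeq\CO[\epsilon^{-1}]$ and let $N\defeq(L^{h},x_{n,m}\sigma)$ be the isocrystal with Frobenius matrix $x_{n,m}$; by construction (Definition~\ref{xnmDef}), $N$ is the rational Dieudonn\'e module of $H_{n,m}$ and hence represents the isogeny class with Newton polygon $\CP$. Covariant Dieudonn\'e theory identifies the Dieudonn\'e modules lying in this isogeny class with the lattices $\Lambda\subset N$ satisfying $\epsilon\Lambda\subseteq F\Lambda\subseteq\Lambda$, the $p$-kernel of the associated module being the $1$-truncated Dieudonn\'e module $\Lambda/\epsilon\Lambda$ with its induced $F$ and $V$. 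Since $Z_{w}$ has $\rk_{k}Z_{w}=h$ and $\rk_{k}F(Z_{w})=n$, every lift of $Z_{w}$ satisfies $\relpos(\Lambda,F\Lambda)=\mu$. Writing $\Lambda=g\Lambda_{0}$ for the standard lattice $\Lambda_{0}=\CO^{h}$ and $g\in G(L)$, and putting $b_{g}\defeq g^{-1}x_{n,m}\sigma(g)$, the conditions on $\Lambda$ become $b_{g}\in G(\CO)\epsilon^{\mu}G(\CO)$; moreover the isomorphism class of $\Lambda/\epsilon\Lambda$ depends only on $b_{g}$, in fact only on its class modulo $\sigma$-conjugation by the first congruence subgroup $\ker(G(\CO)\to G(k))$. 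Hence (i) is equivalent to the existence of $g\in G(L)$ with $b_{g}\in G(\CO)\epsilon^{\mu}G(\CO)$ and $\Lambda/\epsilon\Lambda\cong Z_{w}$.

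Next I would use the explicit form of the bijection $\leftexp{I}{W}\leftrightarrow\{Z_{w}\}$ from Subsection~\ref{TruncClassSect} to rewrite the condition $\Lambda/\epsilon\Lambda\cong Z_{w}$ as a condition on the Iwahori double coset of $b_{g}$. Because $\mu$ is minuscule and $W_{\mu}=W_{I}$, the double cosets $\CI v\CI$ contained in $G(\CO)\epsilon^{\mu}G(\CO)$ admit a transparent description, and the goal of this step is to prove that, for a Dieudonn\'e lattice $\Lambda=g\Lambda_{0}$, one has $\Lambda/\epsilon\Lambda\cong Z_{w}$ precisely when $b_{g}\in\CI\,(ww_{0}w_{0,I}\epsilon^{\mu})\,\CI$. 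This is done by comparing normal forms: one confronts the standard representative of $Z_{w}$ described in Subsection~\ref{TruncClassSect} with the reduction modulo $\epsilon$ of the pair of lattices $(\Lambda_{0},b_{g}\Lambda_{0})$ carrying the induced $F$ and $V$, and the twist by $w_{0}w_{0,I}$ records precisely the passage between the filtration defined by $F$ and the one defined by $V$ on the $1$-truncated module.

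It then remains to transform ``there exists $g$ with $b_{g}\in G(\CO)\epsilon^{\mu}G(\CO)$ and $b_{g}\in\CI(ww_{0}w_{0,I}\epsilon^{\mu})\CI$'' into condition (ii). By the Cartan decomposition $G(L)=\bigsqcup_{\nu}G(\CO)\epsilon^{\nu}G(\CO)$, every Dieudonn\'e lattice has the form $k_{1}\epsilon^{\lambda}\Lambda_{0}$ with $k_{1}\in G(\CO)$ and $\lambda\in X_{*}(T)$; writing $x_{n,m}=\epsilon^{\zeta}\bar x$ with $\zeta\in X_{*}(T)$ and $\bar x\in W\subset G(\CO)$, the lattice $\epsilon^{\lambda}\Lambda_{0}$ is Dieudonn\'e of Hodge type $\mu$ exactly when $\epsilon^{-\lambda}x_{n,m}\epsilon^{\lambda}=\epsilon^{\zeta+\bar x(\lambda)-\lambda}\bar x$ lies in $W\epsilon^{\mu}W$ (equivalently, $\zeta+\bar x(\lambda)-\lambda$ is a $W$-conjugate of $\mu$). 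One then normalizes this conjugate by the sorting permutation $\eta_{\lambda}$ to the representative $b'\defeq\eta_{\lambda}^{-1}\epsilon^{-\lambda}x_{n,m}\epsilon^{\lambda}\eta_{\lambda}$, refines $k_{1}$ through the Bruhat decomposition $G(\CO)=\bigsqcup_{y\in W}\CI y\CI$, and --- using that $\sigma$ fixes $\epsilon^{\lambda}$ and every permutation matrix --- a bookkeeping computation identifies the elements $b_{g}$ so obtained with the products $a_{1}b'a_{2}$, $a_{1}\in\CI y\CI$, $a_{2}\in\CI y^{-1}\CI$. Combining this with the first two steps, the existence of the required lattice becomes $ww_{0}w_{0,I}\epsilon^{\mu}\in\CI y\CI\,b'\,\CI y^{-1}\CI$ for some $y\in W$ and some $\lambda$ with $\epsilon^{-\lambda}x_{n,m}\epsilon^{\lambda}\in W\epsilon^{\mu}W$, which is exactly (ii). The equal-characteristic case runs identically, the whole argument taking place inside the Dieudonn\'e-module formalism fixed at the outset.

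The step I expect to be the main obstacle is the second one: pinning down exactly which Iwahori double coset of $G(\CO)\epsilon^{\mu}G(\CO)$ carries the isomorphism class $Z_{w}$. This requires a careful comparison between the combinatorial normal form of a $1$-truncated Dieudonn\'e module (from the classification recalled in Subsection~\ref{TruncClassSect}) and the reduction modulo $\epsilon$ of the lattice pair $(\Lambda_{0},b_{g}\Lambda_{0})$, keeping track of $F$ and $V$ at the same time rather than one at a time --- the element $w_{0}w_{0,I}$ is precisely the combinatorial record of this interplay. A secondary subtlety, in the last step, is to verify that nothing is lost by restricting to lattices of the special shape $k_{1}\epsilon^{\lambda}\Lambda_{0}$ with $k_{1}\in\CI y\CI$; this should follow formally from the Cartan and Bruhat decompositions, provided the interaction with $\sigma$-conjugation is handled with sufficient care.
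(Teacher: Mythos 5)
Your framework (lattices in the fixed isocrystal $N=(L^h,x_{n,m}\sigma)$) is a legitimate way to set up the problem, and your first two steps are essentially recoverable: the characterization of when $\Lambda/\epsilon\Lambda\cong Z_w$ is Viehmann's classification of truncations, although it must be phrased up to $G(\CO)$-$\sigma$-conjugacy (the class $b_g$ is only well-defined modulo $\sigma$-conjugation by $G(\CO)$, and the correct condition is that $b_g$ is $\sigma$-conjugate into $G(\CO)_1\,ww_0w_{0,I}\epsilon^\mu\,G(\CO)_1$, not literal membership in $\CI\,ww_0w_{0,I}\epsilon^\mu\,\CI$). The genuine gap is in your third step, and it is not a "secondary subtlety". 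First, for a general Dieudonn\'e lattice $\Lambda=k_1\epsilon^\lambda\Lambda_0$ one has $b_g=(\epsilon^{-\lambda}k_1^{-1}\epsilon^\lambda)(\epsilon^{-\lambda}x_{n,m}\epsilon^\lambda)(\epsilon^{-\lambda}\sigma(k_1)\epsilon^\lambda)$, and conjugation by $\epsilon^{-\lambda}$ does not preserve $G(\CO)$, let alone the Bruhat cells $\CI y\CI$; so the asserted identification of the set of $b_g$ with $\bigcup_y\CI y\CI\,b'\,\CI y^{-1}\CI$ fails (the left-hand set even contains elements outside $G(\CO)\epsilon^\mu G(\CO)$, while the right-hand set does not). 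Moreover the cocharacter $\lambda$ appearing in (ii) is not the Cartan invariant of $\Lambda$ relative to $\Lambda_0$: it records the position of $\Lambda$ relative to the whole graded chain $(\pi^jH_{n,m})_j$, i.e.\ the semi-module of $\Lambda$, which is strictly finer information. Second, and more fundamentally, your route gives no way to prove (ii)$\Rightarrow$(i): from the double-coset condition you must produce a lattice in $N$, i.e.\ an element of $G(\CO)_1ww_0w_{0,I}\epsilon^\mu G(\CO)_1$ that is $\sigma$-conjugate over $G(L)$ to $x_{n,m}$ -- but having Newton polygon $\CP$ is exactly what is to be proved, so this step is circular without an independent lifting construction.

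The paper fills precisely these holes with machinery absent from your proposal. The intermediate notion is a \emph{compatible filtration} of type $(n,m)$ on $Z_w$: a lift with the right Newton polygon produces one by embedding into $H_{n,m}$ and intersecting with $\pi^jH_{n,m}$ (Construction \ref{FilRedCons1}); the associated graded object is classified by semi-modules, equivalently by the cocharacters $\lambda$ with $\epsilon^{-\lambda}x_{n,m}\epsilon^\lambda\in W\epsilon^\mu W$ (Propositions \ref{ShortClassification1} and \ref{ShortClassification2}), and choosing a basis adapted to the filtration is what places the matrix of $F$ in $\leftexp{\eta_\lambda}{\CI}\epsilon^{-\lambda}x_{n,m}\epsilon^\lambda$ (Theorem \ref{CompFlagExists}). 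For the converse, a compatible filtration is used to build a lift by an explicit double induction (Construction \ref{LiftCons1}), and its Newton polygon is controlled by exhibiting a valuation-like function and invoking Lemma \ref{SlopeLemma}. Your "bookkeeping computation" would have to reproduce all of this; as written it does not, so the proposal as it stands does not constitute a proof.
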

Let $\CZ$ denote the center of $G$. The group $X_*(\CZ)\subset X_*(T)$ acts on the set $$\{\lambda\in X_*(T)\mid \epsilon^{-\lambda}x_{n,m}\epsilon^\lambda\in W\epsilon^\mu W\}$$ by addition. By Lemma \ref{LambdaFiniteness}, this action has finitely many orbits. In this way the existence quantifier in $(ii)$ ranges over a finite set. Hence condition $(ii)$ is computable.

\medskip

Now we explain our argument: 

Given a isosimple Dieudonn\'e module $M$ of slope $n/(n+m)$, we obtain a filtration $(G^jZ)_{j\in \BZ}$ on $Z\defeq M/\epsilon M$ such that for all $j\in \BZ$ we have $F(G^jZ)=G^{j+n}Z\cap F(Z)$ and $V(G^jZ)=G^{j+m}Z\cap V(Z)$ by embedding $M$ into the minimal Dieudonn\'e module $M_{n,m}$  (c.f. Subsection \ref{CompFilConsSec}). Conversely, given such a filtration on a $1$-truncated Dieudonn\'e module $Z$ we can construct a lift of $Z$ which is isoclinic of slope $n/(n+m)$ (c.f. Subsection \ref{LiftConsSec}). Hence, in order to determine whether a given $Z$ admits such a lift, it suffices to determine whether there exists such a filtration on $Z$, which we call a compatible filtration of type $(n,m)$ (c.f. Subsection \ref{CompFilSec}).

To determine whether there exists a compatible filtration on $Z$ of type $(n,m)$, we first consider the associated graded situation: Given a compatible filtration $(G^j Z)_{j\in \BZ}$ of type $(n,m)$, one obtains the graded $1$-truncated Dieudonn\'e module $\oplus_{j\in \BZ}G^jZ/G^{j+1}Z$ on which $F$ and $V$ act as morphisms of degree $n$ and $m$ respectively. Following an idea of Chen and Viehmann, in Subsection \ref{LambdaClassSection}, we classify such graded $1$-truncated Dieudonn\'e modules in terms of cocharacters $\lambda\in X_*(T)$ satisfying $\epsilon^{-\lambda}x_{n,m}\epsilon^\lambda\in W\epsilon^\mu W$.

 Then, by comparing compatible filtrations to the associated gradings, we obtain the following criterion for the existence of a compatible filtration:

\begin{theorem}[{c.f. Theorem \ref{CompFlagExists}}] \label{CompFlagExistsIntro}
  Let $M$ be a Dieudonn\'e module of rank $h$ such that $M/FM$ has length $m$. The following are equivalent:
  \begin{enumerate}[(i)]
  \item On the truncation $Z=M/\epsilon M$ there exists a compatible filtration of type $(n,m)$.
  \item There exists $\lambda\in X_*(T)$ satisfying $\epsilon^{-\lambda}x_{n,m}\epsilon^\lambda\in W\epsilon^\mu W$ such that the matrix of $F\colon M\to M$ with respect to some $\CO$-basis of $M$ lies in $\CI\eta_\lambda^{-1}\epsilon^{-\lambda}x_{n,m}\epsilon^\lambda\eta_\lambda \CI$.
  \end{enumerate}
\end{theorem}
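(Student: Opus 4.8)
The plan is to reduce the statement to the classification of graded $1$-truncated Dieudonn\'e modules of Subsection~\ref{LambdaClassSection} by means of a single biconditional about flags. For a valid cocharacter $\lambda$ --- one with $\epsilon^{-\lambda}x_{n,m}\epsilon^\lambda\in W\epsilon^\mu W$ --- put $g_\lambda\defeq\eta_\lambda^{-1}\epsilon^{-\lambda}x_{n,m}\epsilon^\lambda\eta_\lambda$. Since $\eta_\lambda$ arranges the coordinates of $\lambda$ in non-decreasing order, conjugating the translation through the Weyl element gives $g_\lambda=\epsilon^{-\nu}(\eta_\lambda^{-1}x_{n,m}\eta_\lambda)\epsilon^{\nu}$ with $\nu=\eta_\lambda^{-1}\lambda$ antidominant, so that $g_\lambda$ is the matrix of $F$ on the $\epsilon^\nu$-conjugate of the minimal Dieudonn\'e module $M_{n,m}$, re-indexed so that its canonical filtration becomes a flag of initial segments; call this Dieudonn\'e module $M_\lambda$, write $Z_\lambda\defeq M_\lambda/\epsilon M_\lambda$, and recall that for $\lambda$ valid $Z_\lambda$ carries its canonical compatible filtration of type $(n,m)$, whose associated graded is the graded $1$-truncated module $\gr_\lambda$ corresponding to $\lambda$ under the classification of Subsection~\ref{LambdaClassSection}. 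The biconditional to be proved is: given a complete flag $\mathcal F$ on $Z=M/\epsilon M$, the matrix of $F$ with respect to any $\CO$-basis of $M$ inducing $\mathcal F$ lies in $\CI g_\lambda\CI$ --- a property independent of the basis, since changing the basis among those inducing $\mathcal F$ $\sigma$-conjugates the matrix by an element of $\CI$ and $\CI g_\lambda\CI$ is stable under $\CI$-$\sigma$-conjugation --- if and only if $\mathcal F$ refines a compatible filtration of type $(n,m)$ on $Z$ whose associated graded is isomorphic to $\gr_\lambda$. Granting this, the theorem is formal: $(i)$ holds iff some complete flag on $Z$ refines a compatible filtration of type $(n,m)$; writing the associated graded of such a filtration as $\gr_\lambda$ for a unique valid $\lambda$ via Subsection~\ref{LambdaClassSection}, the biconditional turns this into ``for some valid $\lambda$ and some $\CO$-basis of $M$ the matrix of $F$ lies in $\CI g_\lambda\CI$'', that is, into $(ii)$.

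The biconditional is the substance of the argument, and is where the explicit model $M_\lambda$ enters. For ``if'', one chooses a basis inducing $\mathcal F$, writes the matrix of $F$ as $i_1 g_\lambda i_2$ with $i_1,i_2\in\CI$, and changes basis by $i_1^{-1}\in\CI$ --- which does not change the flag $\mathcal F$, and is consistent because $\CI$ is stable under the Frobenius $\sigma$ (as $B(k)$ is defined over $\BF_p$) --- to arrange the matrix to be $g_\lambda\cdot i$ with $i\in\CI$. Reducing modulo $\epsilon$, the operators $F$ and $V$ of $Z$ differ from those of $Z_\lambda$ only by the flag-preserving automorphism $\bar i\in B(k)$ and by $\sigma$, neither of which changes the image of a coordinate subspace; transporting the canonical compatible filtration of $Z_\lambda$ to $Z$ one checks that the equalities $F(G^jZ)=G^{j+n}Z\cap F(Z)$ and $V(G^jZ)=G^{j+m}Z\cap V(Z)$ persist, giving a compatible filtration of type $(n,m)$ refined by $\mathcal F$ with associated graded $\gr_\lambda$. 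For ``only if'', one passes to the associated graded of a compatible filtration refined by $\mathcal F$: the defining equalities express the strictness of $F$ and $V$, whence $\rk_kF(\gr Z)=\rk_kF(Z)=n$ and $\gr Z$ is of the type classified in Subsection~\ref{LambdaClassSection}, hence $\cong\gr_\lambda$; lifting $\mathcal F$ to a flag of saturated free submodules of $M$ and choosing an adapted basis, the matrix of $F$ acquires the same associated-graded datum as $g_\lambda$, and one shows --- using the Dieudonn\'e constraints that $F$ is injective, that $\epsilon M\subseteq FM$, that a Verschiebung with $FV=VF=\epsilon$ exists and is strict, together with the monomial shape of $g_\lambda$ --- that the matrix is then forced into the single Iwahori double coset $\CI g_\lambda\CI$.

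I expect the ``only if'' direction to be the main obstacle: it promotes the $k$-linear datum $\gr Z\cong\gr_\lambda$ to the integral position statement over $\CO$. Making it work requires (a) the bookkeeping of $\eta_\lambda$ that puts the canonical filtration of the $\epsilon^\nu$-twist of $M_{n,m}$ into initial-segment form and makes $\nu$ antidominant, this being exactly the feature that keeps $\CI g_\lambda\CI$ under control, so that the ``higher-filtration'' perturbations of $g_\lambda$ permitted by the relations $FV=VF=\epsilon$ and by strictness are precisely those that lie in $\CI$ on either side; (b) verifying that the choice of adapted basis in the ``only if'' step, together with a suitable identification $\gr Z\cong\gr_\lambda$, really does pin the matrix down to one double coset and not to a union of several; and (c) double-checking, for ``if'', that the compatibility equalities genuinely survive the transport. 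The remaining manipulations --- refining and coarsening flags, moving a flag into standard position by the $\GL_h(\CO)$-action, and transporting filtrations along isomorphisms of truncations --- are routine once the biconditional is established.
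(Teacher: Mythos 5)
Your proposal follows essentially the same route as the paper's proof of Theorem \ref{CompFlagExists}: both directions are reduced, via the classification of graded $1$-truncated Dieudonn\'e modules by cocharacters (Proposition \ref{ShortClassification2}) and the explicit model $M_\lambda\subset H_{n,m}$ of Construction \ref{ZlambdaCons}, to comparing the matrix of $F$ on a basis adapted to the filtration with $\eta_\lambda^{-1}\epsilon^{-\lambda}x_{n,m}\epsilon^\lambda\eta_\lambda$, transporting the canonical filtration of $Z_\lambda$ in one direction and computing the matrix on lifts of the graded basis in the other. The step you defer in the ``only if'' direction is exactly the computation the paper carries out (it shows the matrix lands in the single left coset $\leftexp{\eta_\lambda}{\CI}\epsilon^{-\lambda}x_{n,m}\epsilon^\lambda$, with the strictness equalities $F(G^jZ)=G^{j+n}Z\cap F(Z)$ forcing the $\epsilon$-divisibility pattern), so the sketch is correct and matches the paper's argument.
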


Then by combining the above steps we obtain Theorem \ref{MainTheoremIntro}.

\medskip

\paragraph{Acknowledgement} I am very grateful to Richard Pink for numerous conversations on the topic of this article. I also thank Torsten Wedhorn for helpful remarks and conversations. This work was supported by a fellowship of the Max Planck society as well as a fellowship of the Swiss National Fund. Part of this work was carried out during a visit to the FIM at ETH Z\"urich. I thank the institute for its hospitality and excellent working conditions.

\section{Preliminaries}

\subsection{Setup}

\label{sec:setup}
Throughout, we will work with the following setup and notation:

\begin{itemize}
\item $k$ is an algebraically closed field of characteristic $p>0$.
\item $\CO$ is either the Witt ring $W(k)$ or the ring $k[[t]]$.
\item For $a\in k$, we let $[a]\in \CO$ be either the canonical lift of $a$ in $W(k)$ or the image of $a$ under the inclusion $k\into k[[t]]$.
\item $\epsilon \in \CO$ is the uniformizer $p$ or $t$ accordingly.
\item $L$ is the function field of $\CO$.
\item $v\colon L\to \BZ$ is the valuation normalized such that $v(\epsilon)=1$.
\item $\sigma\colon \CO \to \CO$ is either the canonical lift $W(k)\to W(k)$ of Frobenius or the automorphism $k[[t]]\to k[[t]]$ fixing $t$ and sending $a\in k$ to $a^p$.
\item A Dieudonn\'e module is a finite free $\CO$-module together with a $\sigma$-linear endomorphism $F$ and a $\sigma^{-1}$-linear endomorphism $V$ satisfying $FV=VF=\epsilon$. (In the equicharacteristic case, such an object is usually called an effective and minuscule local $\GL_h$-shtuka.)
\item $F_k\colon k\to k,\; x\mapsto x^p$ is the Frobenius automorphism.
\item A $1$-truncated Dieudonn\'e module is a finite-dimensional $k$-vector space together with an $F_k$-linear endomorphism $F$ and an $F_k^{-1}$-linear endomorphism $V$ such that $\Img F=\ker V$ and $\ker V=\Img V$.
\item To a Dieudonn\'e module $M$ we associate the $1$-truncated Dieudonn\'e module $M/\epsilon M$.
\item  By the Newton polygon of a Dieudonn\'e module $M$ we mean the Newton polygon obtained via covariant Dieudonn\'e theory. That is a Dieudonn\'e module is isoclinic of slope $r/s$ for integers $r,s\geq 0$ if and only if it is isogenous to a Dieudonn\'e module on which $\epsilon^{-r}F^s$ is an automorphism. 
\item We write Newton polygons in the form $\CP=(\nu_1,\hdots,\nu_N)$ where $\nu_1\leq \hdots \leq \nu_N$ are the slopes occurring in $\CP$ with multiplicities.
\item For $\nu\in \BQ^{\geq 0}$ we denote by $n_\nu$ and $m_\nu$ the unique non-negative coprime integers such that $\nu=n_\nu/(n_\nu+m_\nu)$.

\end{itemize}

We will often work with respect to given integers $0\leq d\leq h$. Then we use the following:
\begin{itemize}
\item $G$ is the group scheme $\GL_{h,\CO}$.
\item $T\subset G$ is the canonical torus of diagonal matrices.
\item $B\subset G$ is the canonical Borel subgroup of upper triagonal matrices.
\item $\CI\subset G(\CO)$ is the preimage of $B(k)$ under the projection $G(\CO)\to G(k)$.
\item $G(\CO)_1$ is the kernel of the projection $G(\CO)\to G(k)$.
\item $W\cong S_h$ is the Weyl group of $G$ with respect to $T$ which we identify with the set of monomial matrices with entries in $\{0,1\}$ in either $G(k)$ or $G(\CO)$.
\item $S=\{(i,i+1)\mid 1\leq i\leq h-1\}\subset W$ is the set of simple reflections induced by $B$.
\item $I\subset S$ is the type $S\setminus \{(h-d-1,h-d)\}$.
\item $W_I\subset W$ is the subgroup generated by $I$.
\item $\leftexp{I}{W}$ is the set of left reduced elements with respect to $I$, that is the set of elements $w$ which have minimal length in $W_Iw$.
\item $w_0$ is the longest element in $W$.
\item $w_{0,I}$ is the longest element in $W_I$.
\item We denote by $\tilde W\cong X_*(T) \rtimes W$ the extended Weyl group of $G$, which we identify with the group of monomial matrices in $G(\CO)$ with entries in $\{0\}\cup p^\BZ$.
\item For $\lambda\in X_*(T)$, we denote by $\epsilon^\lambda\defeq \lambda(p)$ its image in $\tilde W$.
\item We denote the cocharacter $\lambda\in X_*(T)$ which sends $t\in \Gm$ to the diagonal matrix with entries $(t^{\lambda_1},\hdots,t^{\lambda_h})$ by $(\lambda_1,\hdots,\lambda_h)$.
\item $\mu\in X_*(T)$ is the cocharacter $(1,\hdots,1,0,\hdots,0)$ where the entry $1$ has multiplicity $h-d$.
\item We say that a Dieudonn\'e module $M$ has Hodge polygon given by $\mu$ if $\rk_\CO M=h$ and $M/FM$ has length $d$.
\item We denote again by $\sigma$ the automorphism of $G(\CO)$ induced by $\sigma\colon \CO\to \CO$.
\item To an element $g\in G(\CO)\epsilon^\mu G(\CO)$ we associate the Dieudonn\'e module $M_g\defeq (\CO^h,g\sigma)$. This gives a bijection between $G(\CO)$-$\sigma$-conjugacy classes in $G(\CO)\epsilon^\mu G(\CO)$ (i.e. orbits under the action $G(\CO)\times G(\CO)\to G(\CO), (g,h)\mapsto gh\sigma(h)^{-1}$) and isomorphism classes of Dieudonn\'e modules with Hodge polygon given by $\mu$.
\end{itemize}

\subsection{Classification of $1$-truncated Dieudonn\'e modules} \label{TruncClassSect}
Fix integers $0\leq d\leq h$. We call a $1$-truncated Dieudonn\'e module $Z$ \emph{of numerical type $(d,h)$} ifit satisfies $\rk_k Z=h$ and $\rk_k  F(Z)=h-d$. Any $1$-truncated Dieudonn\'e module of numerical type $(d,h)$ can be lifted to a Dieudonn\'e module with Hodge polygon given by $\mu$. Furthermore, one can check that for two elements $g_1,g_2\in G(\CO)\epsilon^\mu G(\CO)$ the truncations $M_{g_1}/\epsilon M_{g_1}$ and $M_{g_2}/\epsilon M_{g_2}$ are isomorphic as $1$-truncated Dieudonn\'e modules if and only if $g_2$ is $G(\CO)$-$\sigma$-conjugate to an element of $\G(\CO)_1\epsilon^\mu G(\CO)_1$. Hence isomorphism classes of $1$-truncated Dieudonn\'e modules of numerical type $(d,h)$ correspond to the $\G(\CO)$-$\sigma$-conjugacy classes in $G(\CO)_1 \backslash G(\CO)\epsilon^\mu G(\CO) /G(\CO)_1$. By \cite[Theorem 1.1]{ViehmannTruncations} the set $\{ww_0w_{0,I}\epsilon^\mu\mid w\in \leftexp{I}{W} \}$ gives a set of representatives for these conjugacy classes. Thus the $1$-truncated Dieudonn\'e modules $Z_w\defeq M_{ww_0w_{0,I}\epsilon^\mu}/\epsilon M_{ww_0w_{0,I}\epsilon^\mu}$ for $w\in \leftexp{I}{W}$ are representatives for the isomorphism classes of $1$-truncated Dieudonn\'e modules.

\subsection{Minimal Dieudonn\'e modules}
  For coprime non-negative integers $n$ and $m$, the minimal Dieudonn\'e module $H_{n,m}$ of slope $n/(n+m)$ is defined as follows (c.f. \cite{OortMinimal}): It is the free $\CO$-module with basis $e_1,\hdots,e_{n+m}$. For $i> n+m$, we write $i=a(n+m)+b$ for unique integers $a>1$ and $1\leq b \leq n+m$ and define $e_i\defeq \epsilon^a e_b$. Then $F$ and $V$ are defined by $F(e_i)=e_{i+n}$ and $V(e_i)=e_{i+m}$ for all $i\geq 1$. 

Let $\Phi$ be the $\sigma$-semilinear automorphism of $H_{n,m}$ which fixes the $e_i$. Then $\Phi \pi=\pi \Phi$, $F=\Phi \pi^n$ and $V=\Phi^{-1} \pi^m$.

\begin{definition} \label{xnmDef}
  Let $n$ and $m$ be coprime non-negative integers. We define $x_{n,m}\in \tilde W$ to be the matrix of $F\colon H_{n,m}\to H_{n,m}$ with respect to the basis $(e_h,\hdots,e_1)$. 
\end{definition}

\section{Graded $1$-truncated Dieudonn\'e modules} \label{GTDMSec}
Throughout this section we fix coprime non-negative integers $n$ and $m$ and let $h\defeq n+m$ and $d \defeq n$.

By a grading of a vector space we will always mean a $\BZ$-grading. For a graded vector space $X=\oplus_{j\in \BZ}X^j$ we will call the elements of the $X^j$ the homogenous elements of $X$. For $i\in \BZ$, we say that an additive homomorphism $X\to X'$ between graded vector spaces is \emph{of degree $i$} if it sends every homogenous element of degree $j$ to a homogenous element of degree $j+i$.

\begin{definition}
  A \emph{graded $1$-truncated Dieudonn\'e module} is a $1$-truncated Dieudonn\'e module $Z$ together with a grading $Z=\oplus_{j\in\BZ} Z^j$ such that $F$ and $V$ send homogenous elements of $Z$ to homogenous elements.

A morphism of graded $1$-truncated Dieudonn\'e modules is a morphism of $1$-truncated Dieudonn\'e modules of degree zero.
\end{definition}

\begin{definition}
  A \emph{graded $1$-truncated Dieudonn\'e module of type $(n,m)$} over $k$ is a graded $1$-truncated Dieudonn\'e module $(Z,F,V)$ such that $F$ is of degree $n$ and $V$ is of degree $m$.
\end{definition}

\begin{lemma} \label{MuLemma}
  Let $Z=\oplus_{j\in \BZ}Z^j$ be a graded $1$-truncated Dieudonn\'e module of type $(n,m)$. There exists an integer $c$ such that $\rk_k Z=c(n+m)$ and such that for every $j\in \BZ$ we have
  \begin{equation*}
    \sum_{i\equiv j \pod{n+m}}\rk_k Z^i=c.
  \end{equation*}
\end{lemma}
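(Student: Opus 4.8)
The plan is to exploit the fact that $F$ has degree $n$, $V$ has degree $m$, and the identities $\ker F = \Img V$, $\Img F = \ker V$ from the definition of a $1$-truncated Dieudonn\'e module. For each residue class $j \bmod (n+m)$, set $a_j \defeq \sum_{i \equiv j \pmod{n+m}} \rk_k Z^i$; this is a finite sum since $Z$ is finite-dimensional, and $\rk_k Z = \sum_{j=0}^{n+m-1} a_j$. The first thing I would do is observe that $F$ restricts to a $k$-semilinear injection on a suitable subspace and use the two identities to compare the dimensions of the graded pieces. Concretely, since $\ker F = \Img V$, we have $\rk_k F(Z) = \rk_k Z - \rk_k \ker F = \rk_k Z - \rk_k \Img V = \rk_k Z - \rk_k V(Z)$; but $\ker V = \Img F$ gives $\rk_k V(Z) = \rk_k Z - \rk_k F(Z)$, which is consistent but not yet enough. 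The real input is that $F$ and $V$ are \emph{semilinear} bijections onto their images, so $\rk_k F(Z^i) = \rk_k Z^i$ when $F$ is injective on $Z^i$, and similarly for $V$.

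The key step is a bookkeeping argument on each residue class. Fix $j$ and consider the pieces $Z^i$ with $i \equiv j \pmod{n+m}$. I would like to show $a_j$ is independent of $j$. The natural approach: $F$ maps $Z^i$ into $Z^{i+n}$ and $V$ maps $Z^i$ into $Z^{i+m}$, so both $F$ and $V$ shift the residue class by $n$ and $m$ respectively modulo $n+m$; since $\gcd(n, n+m) = \gcd(n,m) = 1$ (using coprimality), the map "add $n$ mod $n+m$" is a cyclic permutation of $\BZ/(n+m)$. So it suffices to show $a_{j+n} = a_j$ for all $j$, i.e. that $F$ induces, after summing over a residue class, an isomorphism-preserving-dimension between $\bigoplus_{i \equiv j} Z^i$ and $\bigoplus_{i \equiv j+n} Z^i$. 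This does \emph{not} hold piece-by-piece, but it should hold after summing: using $\ker F = \Img V$ and that $V$ is a semilinear bijection onto $\Img V$, one gets $\rk_k (\ker F \cap \bigoplus_{i\equiv j} Z^i) = \rk_k (\Img V \cap \bigoplus_{i \equiv j} Z^i) = \rk_k (\bigoplus_{i \equiv j - m} Z^i \cap V^{-1}(\bigoplus_{i\equiv j}Z^i))$. Chasing this carefully — writing $b_j$ for $\rk_k$ of the degree-$j$-summed image of $F$ and $c_j$ for that of $V$ — should yield a closed system of linear relations among the $a_j$, $b_j$, $c_j$ forcing all $a_j$ equal. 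Then $\rk_k Z = (n+m) a_0$, so $c = a_0$ works.

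The main obstacle I anticipate is that the semilinear maps $F, V$ do not respect the direct-sum decomposition into residue classes in a way that immediately gives dimension equalities: $F$ sends $Z^i$ to $Z^{i+n}$, which is a \emph{single} graded piece, so actually $F$ \emph{does} respect the residue-class decomposition (it sends the class of $j$ to the class of $j+n$), and the issue is only that $F$ need not be injective on each residue-class-summand. The resolution is to set up the linear system: let $A_j \defeq \bigoplus_{i \equiv j} Z^i$, so $Z = \bigoplus_{j \in \BZ/(n+m)} A_j$, and $F(A_j) \subseteq A_{j+n}$, $V(A_j) \subseteq A_{j+m}$, with $\ker F = \Img V$ and $\ker V = \Img F$ \emph{graded} subspaces (they are, since $F$ and $V$ are graded maps). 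Restricting everything to $A_j$: $F|_{A_j} : A_j \to A_{j+n}$ has kernel $\Img V \cap A_j = V(A_{j-m})$ (since $\Img V$ is graded and $V(A_{j-m}) \subseteq A_j$), hence $\rk_k F(A_j) = a_j - \rk_k V(A_{j-m})$. By semilinearity $\rk_k V(A_{j-m}) = a_{j-m} - \rk_k \ker(V|_{A_{j-m}}) = a_{j-m} - \rk_k F(A_{j-m-n})$. Iterating, or better, combining with the parallel relation $\rk_k F(A_j) = a_{j+n} - \rk_k \ker(F|_{A_{j+n}})\cdot(\text{no})$ — rather, $\Img F = \ker V$ gives $\rk_k (\ker V \cap A_{j+n}) = \rk_k F(A_j)$, and separately $\rk_k(\ker V \cap A_{j+n}) = a_{j+n} - \rk_k V(A_{j+n})$. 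Setting $f_j \defeq \rk_k F(A_j)$ and $v_j \defeq \rk_k V(A_j)$, I get $f_j = a_j - v_{j-m}$, $v_j = a_j - f_{j-n}$, and $f_j = v_{j+n-m}\cdot(\text{check})$; more cleanly $v_{j-m} = a_j - f_j$ and $f_{j-n} = a_j - v_j$. Substituting, $a_{j} - f_{j} = v_{j-m} = a_{j-m} - f_{j-m-n}$... this should telescope around the cycle $\BZ/(n+m)$ and, since the cycle has length $n+m$ and $n, m$ generate it, pin down $a_j = a_{j'}$ for all $j, j'$. The only delicate point is verifying the telescoping closes up correctly using $\gcd(n,m)=1$, and handling the degenerate cases $n=0$ or $m=0$ separately (where one of $F$, $V$ is an isomorphism and the statement is immediate).
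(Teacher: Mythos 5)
Your proposal is correct and is essentially the paper's own argument: the paper also decomposes $Z$ into the residue-class summands $Z(j)=\oplus_{i\equiv j}Z^i$ and packages your rank relations into the short exact sequence $0\to Z(j-m)/(Z(j-m)\cap F(Z))\toover{V} Z(j)\toover{F} Z(j+n)\cap F(Z)\to 0$, which together with $Z(j-m)=Z(j+n)$ gives $a_j=a_{j+n}$, and then coprimality of $n$ and $n+m$ finishes. Your "telescoping" worry is unfounded — since $f_{j-m-n}=f_j$ the two relations close up in one step — and no separate treatment of $n=0$ or $m=0$ is needed.
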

\begin{proof}

 For $j\in \BZ$ let $Z(j)\defeq \oplus_{i\equiv j \pod{n+m}}Z^i$. The fact that $Z$ is graded of type $(n,m)$ implies that for each $j$ we have a short exact sequence
 \begin{equation*}
   0\to Z(j-m)/(Z(j-m)\cap F(Z)) \toover{V} Z(j) \toover{F} Z(j+n)\cap F(Z) \to 0.
 \end{equation*}
Using $Z(j-m)=Z(j+n)$ this implies $\rk_k Z(j)=\rk_k Z(j+n)$. Since $n$ and $n+m$ are coprime, iterating this fact yields the claim.

\end{proof}

\subsection{Classification in terms of semimodules}

\begin{definition}[{c.f. \cite[(1.7)]{OortSimple} and \cite[Section 6]{JOPurity}}]
  A \emph{beginning of a semi-module of type $(n,m)$} is a subset $C\subset \BZ$ such that for each $i\in \BZ$ the equivalence class $i+(n+m)\BZ$ contains exactly one element of $C$ and for each $i\in C$ either $i+n\in C$ or $i-m\in C$.
\end{definition}

\begin{lemma}
  Let $Z=\oplus_{j\in \BZ}Z^j$ a graded $1$-truncated Dieudonn\'e module of type $(n,m)$ of rank $h$. Then $C_Z\defeq \{j\in \BZ\mid Z^j\not= 0\}$ is a beginning of a semi-module of type $(n,m)$.
\end{lemma}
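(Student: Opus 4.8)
The plan is to verify the two defining conditions of a beginning of a semi-module for the set $C_Z = \{j \in \BZ \mid Z^j \neq 0\}$. The first condition — that each residue class modulo $n+m$ contains exactly one element of $C_Z$ — should follow directly from Lemma \ref{MuLemma}. Indeed, writing $Z(j) = \oplus_{i \equiv j \pmod{n+m}} Z^i$ as in the proof of that lemma, we have $\rk_k Z = c(n+m)$ with $\sum_{i \equiv j} \rk_k Z^i = c$ for every $j$; but here $\rk_k Z = h = n+m$, so $c = 1$. Hence each residue class $j + (n+m)\BZ$ satisfies $\sum_{i \equiv j} \rk_k Z^i = 1$, which forces exactly one index $i$ in that class to have $Z^i \neq 0$ (and that summand is then one-dimensional). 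This gives the first bullet of the definition.

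For the second condition, fix $i \in C_Z$, so $Z^i \neq 0$; pick a nonzero homogeneous element $z \in Z^i$. Since $Z$ is a $1$-truncated Dieudonn\'e module, $\ker F = \Img V$ and $\Img F = \ker V$, so $z$ cannot lie in both $\ker F$ and $\ker V$ (otherwise $z \in \Img V \cap \Img F$... actually one argues: if $F(z) = 0$ then $z \in \Img V$, and if moreover $V(z) = 0$ then $z \in \ker V = \Img F$; this alone is not yet a contradiction, so I must argue more carefully). The cleaner route: if $F(z) = 0$ and $V(z) = 0$ simultaneously for all of $Z^i$, I would derive a contradiction with $c = 1$ by a counting/exactness argument analogous to the short exact sequence in Lemma \ref{MuLemma}, localized to the summand $Z^i$ inside the one-dimensional space $Z(i) = Z^i$. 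Concretely, the exact sequence $0 \to Z(i-m)/(Z(i-m)\cap F(Z)) \xrightarrow{V} Z(i) \xrightarrow{F} Z(i+n) \cap F(Z) \to 0$ shows that $Z^i = Z(i)$ maps isomorphically onto $Z(i+n)\cap F(Z)$ modulo the image of $V$ from $Z(i-m) = Z(i+n)$; since all these spaces are at most one-dimensional, either $F|_{Z^i}$ is injective — so $F(z) \neq 0$ and, because $F$ has degree $n$, $F(z) \in Z^{i+n}$ is nonzero, giving $i + n \in C_Z$ — or else $V|_{Z(i-m)}$ is surjective onto $Z^i$, so $Z^i \subseteq \Img V$, and then (using $\ker F = \Img V$) one instead gets that the element coming from $Z^{i-m}$ under $V$ is nonzero, hence $Z^{i-m} \neq 0$, i.e. $i - m \in C_Z$.

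The main obstacle I anticipate is making the dichotomy "$i+n \in C_Z$ or $i - m \in C_Z$" precise and airtight from the exact sequence: one must be careful that the relevant maps are between one-dimensional (or zero) spaces, so that injectivity/surjectivity genuinely forces nonvanishing of the appropriate graded piece, and one must handle the edge case where $Z^i \cap F(Z)$ versus $Z^i \cap \Img V$ is which. A clean way to package this is: the exact sequence at index $i$ reads $0 \to Z^{i-m}/(Z^{i-m} \cap F(Z)) \to Z^i \to Z^{i+n} \cap F(Z) \to 0$ (using $Z(i) = Z^i$ etc. when $c=1$); since $\dim Z^i = 1$, exactly one of the two outer terms is $1$-dimensional and the other is $0$. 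If the right term is nonzero then $Z^{i+n} \neq 0$; if the left term is nonzero then $Z^{i-m} \neq 0$ (as $Z^{i-m}/(\cdot)$ being nonzero forces $Z^{i-m} \neq 0$). Either way we land in $C_Z$, completing the verification. I would then remark that this also recovers $\rk_k F(Z) = n$, consistent with $d = n$, as a sanity check.
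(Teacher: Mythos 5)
Your argument is correct and is exactly the intended one: the paper's proof simply cites Lemma \ref{MuLemma} and the definitions, and your write-up supplies the same details (the $c=1$ count gives that each residue class meets $C_Z$ exactly once, and the exact sequence from the proof of Lemma \ref{MuLemma}, together with the degree constraints on $F$ and $V$, gives the dichotomy $i+n\in C_Z$ or $i-m\in C_Z$). Only your closing sanity check is off: with $d=n$ one has $\rk_k F(Z)=h-d=m$ (there are $m$ indices $j\in C_Z$ with $j+n\in C_Z$), not $n$.
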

\begin{proof}
  This follows from the definition of $1$-truncated Dieudonn\'e modules of type $(n,m)$ together with Lemma \ref{MuLemma}.  
\end{proof}

\begin{construction}
  Let $C$ be a beginning of a semi-module of type $(n,m)$. We construct a graded $1$-truncated Dieudonn\'e module $Z_C$ of type $(n,m)$ and of rank $h$ as follows:

Let $Z$ be the free $k$-vector space with basis $(e_j)_{j\in C}$. Endow $Z_C$ with the grading for which each $e_j$ is homogenous of degree $j$. We define $F$ and $V$ as follows: Let $j\in C$. If $j+n\in C$ we let $F(e_j)\defeq e_{j+n}$ and $V(e_{j+n})\defeq 0$. Otherwise $j-m\in C$ and we let $V(e_{j-m})\defeq e_j$ and $F(e_j)=0$. Then by a direct verification $Z_C$ has the required properties.
\end{construction}
\begin{proposition} \label{ShortClassification1}
  The assignments $Z=\oplus_{j\in \BZ}Z^j\mapsto C_Z$ and $C\mapsto Z_C$ give mutually inverse bijections between the set of isomorphism classes of $1$-truncated Dieudonn\'e modules of type $(n,m)$ and of rank $h$ and the set of beginnings of semi-modules of type $(n,m)$. 
\end{proposition}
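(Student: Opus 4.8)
The plan is to verify that the two assignments are well-defined on isomorphism classes and are mutually inverse, by exhibiting explicit normal forms. First I would check that $C \mapsto Z_C$ followed by $Z \mapsto C_Z$ is the identity: by construction, $Z_C$ has basis $(e_j)_{j \in C}$ with $e_j$ homogeneous of degree $j$, so $Z_C^j \neq 0$ precisely when $j \in C$, i.e. $C_{Z_C} = C$. This direction is immediate. For the other direction, given a graded $1$-truncated Dieudonn\'e module $Z = \oplus_j Z^j$ of type $(n,m)$ and rank $h$, I must produce an isomorphism $Z_{C_Z} \iso Z$ of graded $1$-truncated Dieudonn\'e modules. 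The key structural input is that for each $j \in \BZ$ one has $\dim_k Z^j \leq 1$; granting this, each nonzero $Z^j$ (i.e. each $j \in C_Z$) is one-dimensional, and I can choose a basis vector of each, matching the basis $(e_j)$ of $Z_{C_Z}$, after which one checks $F$ and $V$ act by the formulas in the Construction and rescales the basis vectors to make the nonzero structure constants equal to $1$.

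So the heart of the argument is: \emph{every $Z^j$ is at most one-dimensional}. Here is how I would argue this. By Lemma \ref{MuLemma} there is an integer $c$ with $\sum_{i \equiv j \pmod{n+m}} \dim_k Z^i = c$ for all $j$, and $\dim_k Z = c(n+m) = ch$; since $\dim_k Z = h = n+m$, we get $c = 1$. Thus for each residue class $j + (n+m)\BZ$, exactly one degree $i$ in that class has $Z^i \neq 0$, and that $Z^i$ is one-dimensional. This simultaneously shows $\dim_k Z^j \leq 1$ for all $j$ and recovers the defining property of a beginning-of-semi-module on the "one element per equivalence class" side — which is exactly the content already recorded in the preceding lemma identifying $C_Z$ as such a beginning. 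With one-dimensionality in hand, pick for each $j \in C_Z$ a nonzero $z_j \in Z^j$. Since $F$ has degree $n$ and $V$ has degree $m$, $F(z_j) \in Z^{j+n}$ and $V(z_j) \in Z^{j+m}$. Using $\ker F = \Img V$, $\Img F = \ker V$ together with the exact sequence from the proof of Lemma \ref{MuLemma} (applied with $c=1$, so each $Z(j)$ is one-dimensional and $V \colon Z(j-m)/(Z(j-m)\cap F(Z)) \to Z(j)$ alternates between being an isomorphism and the zero map according to whether $j \in F(Z)$), one sees that for $j \in C_Z$ exactly one of the following holds: $j + n \in C_Z$ with $F(z_j) \neq 0$, $V(z_{j+n}) = 0$; or $j - m \in C_Z$ with $V(z_{j-m}) \neq 0$, $F(z_j) = 0$. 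This is precisely the branch structure in the Construction.

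Finally I would fix the scalings. Since $C_Z$ meets each residue class once, the "$F$-then-$V$" orbit structure on $C_Z$ decomposes $C_Z$ into finitely many cyclic chains under the moves $j \mapsto j+n$ (when $j+n \in C_Z$) and $j \mapsto j - m$ (when $j - m \in C_Z$); along each such chain the composite of all the nonzero $F$'s and $V^{-1}$'s is a nonzero scalar, and one can successively rescale the $z_j$ so that each individual nonzero structure constant becomes $1$, with one global scalar choice per chain being the only freedom (which does not affect the isomorphism class). The resulting $k$-linear map $Z_{C_Z} \to Z$, $e_j \mapsto z_j$, is then degree-zero, bijective, and commutes with $F$ and $V$, hence an isomorphism. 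The main obstacle, as indicated, is organizing the combinatorics of the chains cleanly enough to make the rescaling step rigorous without drowning in indices; everything else is a direct unwinding of the definitions and of Lemma \ref{MuLemma}.
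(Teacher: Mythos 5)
Your strategy is essentially the paper's: deduce from Lemma \ref{MuLemma} (with $c=1$) that every nonzero graded piece is one-dimensional, pick a nonzero vector in each, and normalize structure constants along the chain generated by the moves $j\mapsto j+n$ and $j\mapsto j-m$. One small imprecision: since both moves shift the residue class modulo $h=n+m$ by $+n$, and $\gcd(n,h)=1$ while $C_Z$ meets each residue class exactly once, the orbit structure is a \emph{single} cycle of length $h$, not ``finitely many cyclic chains''. This is harmless, but it is the observation that makes the bookkeeping clean (and is exactly how the paper organizes the induction, starting from one $f_0$ and propagating by $F$ and $V^{-1}$).

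The genuine gap is in the rescaling step. Successive rescaling along the cycle lets you make $h-1$ of the $h$ structure constants equal to $1$, but the closing constant is then \emph{forced}: it is the monodromy $\lambda\in k^*$ of the composite of the $F$'s and $V^{-1}$'s around the loop, twisted by the initial choice. Since each step is $\sigma$-semilinear, replacing the starting vector $z_{j_0}$ by $\mu z_{j_0}$ changes the closing constant from $\lambda$ to $\mu^{p^{h}-1}\lambda$ (up to the precise exponent bookkeeping), so it is \emph{not} true that the global scalar per chain is a free choice that ``does not affect the isomorphism class''; rather, $\mu$ must be chosen to solve $\mu^{p^{h}-1}=\lambda^{-1}$. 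This equation is solvable precisely because $k$ is algebraically closed, and this is the one place in the whole proof where that hypothesis is used. The paper handles it explicitly (``pick $\mu\in k^*$ such that $\mu^{p^{m-n}}\lambda=\mu$''); your writeup needs the analogous step, or the map $e_j\mapsto z_j$ will fail to commute with $F$ and $V$ at the edge closing the cycle.
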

\begin{proof}
The identity $C=C_{Z_C}$ follows directly from the definition of $Z_C$.

 It remains to prove that each $Z$ is isomorphic to $Z_{C_Z}$ as a graded $1$-truncated Dieudonn\'e module. To see this, start with an element $j_0\in C_Z$ and a non-zero element $f_0 \in Z^{j_0}$. We iteratively construct a sequence of pairs $(j_s\in C_Z, f_s\in Z^{j_s}\setminus \{0\})$ as follows: If $j_k+n\in C$ we let $j_{k+1}\defeq j_k+n$ and $f_{j+1}\defeq F(f_j)$. Otherwise we let $j_{k+1}\defeq j_k-m$ and $f_{j+1}\in Z^{j_k-m}$ the unique element such that $V(f_{j+1})=f_j$. 

 By construction, for $s\geq 0$, the element $j_s\in C_Z$ is the unique element of $C_Z$ in $j_s+sn+h\BZ$. Thus $j_h=j_0$ and hence $f_h=\lambda f_0$ for some $\lambda\in k^*$. Pick $\mu\in k^*$ such that $\mu^{p^{m-n}}\lambda=\mu$. In $C_Z$ there a $m$ elements $j$ satisfying $j+n\in C_Z$ and $n$ elements $j$ satisfying $j-m\in C_Z$ (c.f. \cite[Section 6]{JOPurity}). Hence by replacing $f_0$ by $\mu f_0$ in the above construction we obtain a sequence such that $f_h=f_0$. Then for each $j\in C_Z$ we let $e_j\defeq f_k$ for the unique $0\leq k< h$ such that $j_k=j$. The resulting basis $(e_j)_{j\in C_Z}$ of $Z$ gives an isomorphism $Z\cong Z_{C_Z}$ of graded $1$-truncated Dieudonn\'e modules.
\end{proof}

\subsection{Classification in terms of cocharacters} \label{LambdaClassSection}
Now we show that $1$-truncated Dieudonn\'e modules of type $(n,m)$ and rank $h$ can also be classified by certain cocharacters $\lambda\in X_*(T)$. The idea behind this classification is due to Chen and Viehmann (c.f. \cite{ChenViehmann}).
\begin{construction} \label{ZlambdaCons}
  Let $\lambda=(\lambda_1,\hdots,\lambda_h) \in X_*(T)$ be a cocharacter satisfying $\epsilon^{-\lambda} x_{n,m} \epsilon^{\lambda}\in W\epsilon^\mu W$. We construct a graded $1$-truncated Dieudonn\'e module of type $(n,m)$ as follows: As in Definition \ref{xnmDef}, we consider the Dieudonn\'e module $M_{n,m}$ with the basis $(e_{n+m},\hdots,e_1)$. For $1\leq j\leq h$ let $f_j\defeq \epsilon^{\lambda_j}e_{h+1-j}$. The $f_j$ form a $\CO$-basis of a submodule $M\subset M_{n,m}$ and the matrix of $F$ with respect to this basis is $\epsilon^{-\lambda} x_{n,m} \epsilon^{\lambda}$. Hence the assumption $\epsilon^{-\lambda} x_{n,m} \epsilon^{\lambda}\in W\epsilon^\mu W$ means that $M$ is a sub-Dieudonn\'e module of $M_{n,m}$ with Hodge polygon given by $\mu$. Let $Z\defeq M/\epsilon M$ with basis $(\bar f_j\defeq f_j+\epsilon M)_{1\leq j\leq h}$. Equipping $Z$ with the grading for which each $\bar f_j$ is homogenous of degree $h+1-j+h\lambda_j$ makes $Z$ into a graded $1$-truncated Dieudonn\'e module of type $(n,m)$ which we denote by $Z_\lambda$.
\end{construction}

\begin{proposition} \label{ShortClassification2}
  The assignment $\lambda\mapsto Z_\lambda$ gives a bijection from the set
  \begin{equation*}
    \{\lambda\in X_*(T)\mid \epsilon^{-\lambda} x_{n,m} \epsilon^{\lambda}\in W\epsilon^\mu W\}   
      \end{equation*}
to the set of isomorphism classes of $1$-truncated Dieudonn\'e modules of type $(n,m)$ and rank $h$.
\end{proposition}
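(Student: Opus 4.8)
The plan is to show that the composite
\[
\{\lambda\in X_*(T)\mid \epsilon^{-\lambda} x_{n,m}\epsilon^\lambda\in W\epsilon^\mu W\}\;\longrightarrow\;\{\text{iso.\ classes of graded $1$-truncated Dieudonn\'e modules of type $(n,m)$, rank $h$}\}\;\longrightarrow\;\{\text{beginnings of semi-modules of type $(n,m)$}\},
\]
where the first arrow is $\lambda\mapsto Z_\lambda$ and the second is $Z\mapsto C_Z$ from Proposition \ref{ShortClassification1}, is a bijection; since the second arrow is already known to be a bijection, this will give the result. Concretely, for $\lambda$ in the source set I would compute $C_{Z_\lambda}$ directly from Construction \ref{ZlambdaCons}: the nonzero graded pieces of $Z_\lambda$ are spanned by the images $\bar f_j$, of degree $h+1-j+h\lambda_j$, so
\[
C_{Z_\lambda}=\{\,h+1-j+h\lambda_j \mid 1\le j\le h\,\}.
\]
First I would verify this set is really a beginning of a semi-module directly (one element in each class mod $h=n+m$ because the $h+1-j$ run over a full set of residues; the $F$/$V$ closure condition because $M\subset M_{n,m}$ is $F$- and $V$-stable and $F,V$ on $M_{n,m}$ act by $e_i\mapsto e_{i+n}$, $e_i\mapsto e_{i+m}$, which in the chosen normalization of degrees translates into "$\deg+n$ or $\deg-m$ lies in the set"). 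This makes $\lambda\mapsto C_{Z_\lambda}$ a well-defined map, and one checks $C_{Z_\lambda}$ determines the grading type data appearing in the construction.

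Next I would exhibit an explicit inverse. Given a beginning of a semi-module $C$ of type $(n,m)$, there is for each $j\in\{1,\dots,h\}$ a unique element $c_j\in C$ congruent to $h+1-j$ modulo $h$; write $c_j=h+1-j+h\lambda_j$ and set $\lambda=(\lambda_1,\dots,\lambda_h)\in X_*(T)$. I would then show that the matrix of $F$ on the submodule $M=\langle \epsilon^{\lambda_j}e_{h+1-j}\rangle_j\subset M_{n,m}$ built from this $\lambda$ (exactly as in Construction \ref{ZlambdaCons}) lies in $W\epsilon^\mu W$: this is equivalent to saying $M$ is a sub-Dieudonn\'e module of $M_{n,m}$ with Hodge polygon $\mu$, i.e.\ that $F$ and $V$ map $M$ into $M$ and that $M/FM$ has length $d=n$. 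Stability of $M$ under $F$ and $V$ follows because the semi-module closure condition "$c_j+n\in C$ or $c_j-m\in C$" says exactly that $F e_{h+1-j}=e_{h+1-j+n}$ (resp.\ $V$) maps the $j$-th basis vector, up to an $\epsilon$-power matching another $\lambda$-exponent, onto another basis vector of $M$; the length of $M/FM$ being $n$ then drops out from the count (recalled in the proof of Proposition \ref{ShortClassification1}, via \cite[Section 6]{JOPurity}) that exactly $m$ elements $c\in C$ satisfy $c+n\in C$ and $n$ satisfy $c-m\in C$. Hence $\lambda$ lies in the source set, and by construction $C_{Z_\lambda}=\{c_1,\dots,c_h\}=C$.

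Finally I would check the two maps are mutually inverse. In one direction $C\mapsto\lambda\mapsto C_{Z_\lambda}=C$ is the identity by the last sentence above. In the other, starting from $\lambda$ in the source set, forming $C=C_{Z_\lambda}=\{h+1-j+h\lambda_j\}$ and then the associated $\lambda'$, one has $c_j=h+1-j+h\lambda_j$ is the unique element of $C$ in the class of $h+1-j$ mod $h$ — this uniqueness is precisely the semi-module axiom — so $\lambda'_j=\lambda_j$, i.e.\ $\lambda'=\lambda$. Composing with the bijection $C\leftrightarrow Z_C$ of Proposition \ref{ShortClassification1} then upgrades this to the asserted bijection between the cocharacter set and isomorphism classes of graded $1$-truncated Dieudonn\'e modules of type $(n,m)$ and rank $h$, and one last remark identifies $Z_{C_{Z_\lambda}}\cong Z_\lambda$ using the isomorphism constructed in Proposition \ref{ShortClassification1}.

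The main obstacle I expect is the bookkeeping in translating the condition $\epsilon^{-\lambda}x_{n,m}\epsilon^\lambda\in W\epsilon^\mu W$ into the combinatorial closure condition on $C$, and in particular matching up the $\epsilon$-exponents so that $F$ and $V$ genuinely preserve $M$: one has to be careful about the reversed indexing $(e_h,\dots,e_1)$ used to define $x_{n,m}$, about the factor $h$ in the degree normalization $h+1-j+h\lambda_j$, and about the distinction between $c_j\in C$ lying in the "$+n$" branch versus the "$-m$" branch. Once the dictionary "homogeneous degree of $\bar f_j$ $\leftrightarrow$ exponent bookkeeping in $M_{n,m}$" is set up cleanly, everything else is a direct verification, so the content of the proof is really this translation together with the already-established Proposition \ref{ShortClassification1} and the length count from \cite{JOPurity}.
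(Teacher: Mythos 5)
Your proposal is correct and follows essentially the same route as the paper: both arguments reduce to the semi-module classification of Proposition \ref{ShortClassification1} and match a cocharacter $\lambda$ with a beginning of a semi-module $C$ via the dictionary ``$\lambda_j$ is determined by the unique element of $C$ congruent to $h+1-j$ modulo $h$,'' realizing both as sub-Dieudonn\'e modules of $H_{n,m}$. Your version merely makes the mutual-inverse check and the Hodge-polygon count (via the $m$ versus $n$ branch count from \cite{JOPurity}) more explicit than the paper, which instead deduces the Hodge polygon condition from the isomorphism $M/\epsilon M\cong Z_C$.
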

\begin{proof}
  Let $Z$ be a $1$-truncated Dieudonn\'e module of type $(n,m)$ and rank $h$. By Proposition \ref{ShortClassification1} we may assume that $Z=Z_C$ for some beginning of a semi-module $C$. For each $1\leq j\leq h$ let $\lambda_j$ be the unique integer such that $h+1-j+h\lambda_j\in C$. Let $M$ be the the sub-$\CO$-module of $H_{n,m}$ spanned by $\{e_j\mid j\in C\}=\{\epsilon^{\lambda_j}e_{h+1-j}\mid 1\leq j\leq h\}$. The fact that $C$ is the beginning of a semi-module of type $(n,m)$ implies that $M$ is a sub-Dieudonn\'e module of $H_{n,m}$. Furthermore, the assignment $e_j\in M\mapsto e_j\in Z_C$ for $i\in C$ induces an isomorphism $M/\epsilon M\cong Z_C$ of $1$-truncated Dieudonn\'e modules. This implies that $M$ has Hodge polygon given by $\mu$ which in turn is equivalent to $\epsilon^{-\lambda} x_{n,m} \epsilon^{\lambda}\in W\epsilon^\mu W$. It follows from the above that $Z_C\cong Z_\lambda$ as graded $1$-truncated Dieudonn\'e modules. Thus the map in question is surjective. As for the injectivity, it follows directly from Construction \ref{ZlambdaCons} that $\lambda$ can be recovered from the grading on $Z_\lambda$.
\end{proof}

\section{Compatible filtrations on $1$-truncated Dieudonn\'e modules} \label{CompFilSection}

\subsection{Definitions} \label{CompFilSec}
By a decreasing filtration $(G^j X)_{j\in \BZ}$ on a finite-dimensional vector space $X$ we mean a family of subspaces such that $G^jX\supset G^{j+1}X$ for all $j\in \BZ$, such that $G^jX=X$ for all small enough $j$ and such that $G^jX=0$ for all large enough $j$. Given two descending filtrations $(G^jX)_{j\in \BZ}$ and $(\G^jX')_{j\in \BZ}$ on two such vector spaces $X$ and $X'$ and an integer $i$, we call an additive homomorphism $h\colon X\to X'$ filtered of degree $i$ if $h(G^jX)\subset G^{j+i}X$ for all $j\in \BZ$.

\begin{lemma} \label{CompFilLemma}
  Let $n$ and $m$ be coprime non-negative integers and $Z$ a $1$-truncated Dieudonn\'e module over $k$. Let $(G^jZ)_{j\in \BZ}$ a descending filtration on $Z$ such that $F$ is filtered of degree $n$ and such that $V$ is filtered of degree $m$. The following two conditions are equivalent:
  \begin{enumerate}[(i)]
  \item The vector space $\gr Z\defeq \oplus_j G^j Z/G^{j+1}Z$ together with the graded semilinear endomorphisms of degree $n$ and $m$ induced by $F$ and $V$ is a graded $1$-truncated Dieudonn\'e module of type $(n,m)$.
  \item For all $j\in \BZ$ we have $F(G^jZ)=G^{j+n}Z\cap F(Z)$ and $V(G^jZ)=G^{j+m}Z\cap V(Z)$.
  \end{enumerate}
\end{lemma}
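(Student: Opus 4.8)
The plan is to prove the two implications separately, unwinding the definitions in terms of the associated graded pieces. Throughout, write $\gr^j Z = G^j Z / G^{j+1} Z$ for each $j \in \BZ$, and let $\bar F, \bar V$ be the maps induced on $\gr Z$ by $F, V$; these are well-defined of degrees $n, m$ respectively precisely because $F, V$ are assumed filtered of those degrees. The key observation in both directions is that for an $F_k$-linear (resp.\ $F_k^{-1}$-linear) map, being injective or surjective can be checked after passing to associated graded, since $F_k$ is bijective on $k$; more precisely, the natural surjection $G^j Z / (G^{j+n}Z \cap F(Z))\to \ldots$ requires a small argument comparing $\dim_k \bar F(\gr^j Z)$ with the drop in dimension along the filtration.

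\emph{(ii) $\Rightarrow$ (i).} Assume $F(G^jZ) = G^{j+n}Z \cap F(Z)$ and $V(G^jZ) = G^{j+m}Z \cap V(Z)$ for all $j$. I must check that $\gr Z$, with $\bar F$ of degree $n$ and $\bar V$ of degree $m$, satisfies $\Img \bar F = \ker \bar V$ and $\ker \bar F = \Img \bar V$. Fix $j$. The hypothesis on $F$ lets one compute $\bar F(\gr^j Z) = F(G^jZ)/(F(G^jZ)\cap G^{j+n+1}Z)$, and since $F(G^jZ) = G^{j+n}Z\cap F(Z)$ and $F(G^{j+1}Z) = G^{j+n+1}Z\cap F(Z)$, this equals $(G^{j+n}Z\cap F(Z))/(G^{j+n+1}Z\cap F(Z))$, i.e.\ the image of $F(Z)$ in $\gr^{j+n}Z$. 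Likewise the hypothesis on $V$ shows $\bar V(\gr^{j}Z)$ is the image of $V(Z)$ in $\gr^{j+m}Z$. Now use $\ker F = \Img V$ and $\ker V = \Img F$ in $Z$: for instance, an element of $\gr^{j+n} Z$ killed by $\bar V$ lifts to $x \in G^{j+n}Z$ with $V(x) \in G^{j+n+m+1}Z$; writing $V(x) = V(x')$ with $x' \in G^{j+n+1}Z$ using $V(G^{j+n+1}Z) = G^{j+n+m+1}Z\cap V(Z)$ and the degree-$m$ filtered property, one gets $x - x' \in \ker V = \Img F$, say $x - x' = F(y)$; the filtered property of $F$ and a short diagram chase then places $y$ in $G^jZ$ modulo $G^{j+1}Z$ after adjusting, giving that the class of $x$ lies in $\Img \bar F = \bar F(\gr^j Z)$. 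The reverse inclusion $\Img \bar F \subseteq \ker \bar V$ is immediate since $VF = 0$ on $\gr Z$ (as $VF = \epsilon$ acts as $0$ on a $1$-truncated module), and symmetrically for $\ker \bar F = \Img \bar V$.

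\emph{(i) $\Rightarrow$ (ii).} Conversely, assume $\gr Z$ is a graded $1$-truncated Dieudonn\'e module of type $(n,m)$. The inclusions $F(G^jZ) \subseteq G^{j+n}Z \cap F(Z)$ and $V(G^jZ) \subseteq G^{j+m}Z\cap V(Z)$ hold by the filtered-degree hypotheses, so only the reverse inclusions need proof, and by symmetry I treat $F$. The strategy is a dimension count summed over the filtration. On one hand, $\dim_k F(Z) = \sum_j \dim_k \big((G^{j}Z\cap F(Z) + G^{j+1}Z)/G^{j+1}Z\big)$, the image of $F(Z)$ in $\gr^j Z$. On the other hand $\dim_k F(Z) = \sum_j \dim_k \bar F(\gr^{j-n}Z)$ because condition (i) forces $\ker \bar F = \Img \bar V$ and an Euler-characteristic computation as in the proof of Lemma \ref{MuLemma} (the short exact sequences there) pins down the ranks; combined with $\bar F(\gr^{j-n}Z) \subseteq \text{image of } F(G^{j-n}Z) \subseteq \text{image of }F(Z)$ in $\gr^j Z$, each of the finitely many inclusions $\bar F(\gr^{j-n}Z) \subseteq (\text{image of }F(Z)\text{ in }\gr^j Z)$ must be an equality. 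Lifting this back: given $x \in G^{j+n}Z \cap F(Z)$, its class in $\gr^{j+n}Z$ lies in $\bar F(\gr^j Z)$, so there is $y \in G^j Z$ with $F(y) \equiv x \pmod{G^{j+n+1}Z}$; then $x - F(y) \in G^{j+n+1}Z \cap F(Z)$ and one repeats (descending induction on $j$ is legitimate since $G^\bullet Z$ is exhausting and eventually zero), yielding $x \in F(G^jZ)$.

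The main obstacle is the bookkeeping in the (i) $\Rightarrow$ (ii) direction: one cannot argue degree-by-degree in isolation because the equality $\bar F(\gr^{j-n}Z) = (\text{image of }F(Z)\text{ in }\gr^j Z)$ is only visible after summing dimensions over all $j$ and invoking that $\dim_k F(Z)$ is an honest invariant of $Z$ independent of the filtration. Getting the exact sequences from Lemma \ref{MuLemma} to yield the needed rank identity $\dim_k F(Z) = \sum_j \dim_k \bar F(\gr^{j-n}Z)$ — really just $\dim_k \bar F(\gr Z) = \dim_k \Img F$, i.e.\ that $\bar F$ has the same rank as $F$ — is the crux, and it is exactly here that the $1$-truncated axioms $\ker \bar V = \Img \bar F$ on the graded object are used. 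The (ii) $\Rightarrow$ (i) direction, by contrast, is a routine diagram chase once the images $\bar F(\gr^j Z)$ and $\bar V(\gr^j Z)$ have been identified with the images of $F(Z)$ and $V(Z)$ in the appropriate graded pieces.
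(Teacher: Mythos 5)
Your proposal is correct and supplies exactly the ``direct verification'' that the paper leaves to the reader: (ii)$\Rightarrow$(i) by identifying $\bar F(\gr^{j}Z)$ with the image of $F(Z)$ in $\gr^{j+n}Z$ and chasing, and (i)$\Rightarrow$(ii) by a global rank count followed by descending induction on the filtration. The one step you leave slightly vague --- that $\rk\bar F=\rk F$ --- is most cleanly closed by combining the two a priori inequalities $\rk\bar F\le\rk F$ and $\rk\bar V\le\rk V$ with the identities $\rk\bar F+\rk\bar V=\dim_k Z=\rk F+\rk V$ coming from $\ker\bar F=\Img\bar V$ on $\gr Z$ and $\ker F=\Img V$ on $Z$, which forces both inequalities to be equalities.
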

\begin{proof}
  This follows from a direct verification.
\end{proof}
\begin{definition}
  Let $n$ and $m$ be coprime non-negative integers and $Z$ a $1$-truncated Dieudonn\'e module over $k$. A \emph{compatible filtration of type $(n,m)$} on $Z$ is a decreasing filtration $E=(\G^jZ)_{j\in \BZ}$ by $k$-submodules such that $F$ is filtered of degree $n$, such that $V$ is filtered of degree $m$ and such that the equivalent conditions of Lemma \ref{CompFilLemma} are satisfied.

For such an $E$, we denote by $\gr_E(Z)$ the associated graded $1$-truncated Dieudonn\'e module from Lemma \ref{CompFilLemma}.
\end{definition}
\begin{example} \label{AssFiltered}
  Let $n$ and $m$ be coprime non-negative integers and $Z=\oplus_{i\in \BZ}Z^i$ a graded $1$-truncated Dieudonn\'e module of type $(n,m)$. Then the filtration $E$ given by $G^j(Z)\defeq \oplus_{i\geq j}Z^i$ is a compatible filtration of type $(n,m)$. The associated graded $1$-truncated Dieudonn\'e module $\gr_E Z$ is canonically isomorphic to $Z$.
\end{example}

\begin{definition}
  Let $\CP=(\nu_1,\hdots,\nu_N)$ a Newton polygon. Let $Z$ be a $1$-truncated Dieudonn\'e module. A \emph{compatible filtration with Newton polyon $\CP$} on $Z$ is a filtration $0=Z_0\subset Z_1 \hdots \subset Z_N=Z$ by sub-$1$-truncated Dieudonn\'e modules such that the subquotients $Z_i/Z_{i-1}$ are $1$-truncated Dieudonn\'e modules of rank $n_{\nu_i}+m_{\nu_i}$ together with compatible filtrations $E_i$ on the $Z_i/Z_{i-1}$ of type $(n_{\nu_i},m_{\nu_i})$.
\end{definition}

\subsection{Compatible filtrations associated to Dieudonn\'e modules}
\label{CompFilConsSec}
In this subsection, for a Dieudonn\'e module $M$ with Newton polygon $\CP$ we construct a compatible filtration with Newton polygon $\CP$ on $M/\epsilon M$. The idea behind this construction is originally due to Manin (c.f. \cite[Section III.5]{Manin}) and was also used by de Jong and Oort in \cite{JOPurity} and by Oort in \cite{OortSimple}.

\begin{construction} \label{FilRedCons1}
  Let $n$ and $m$ be coprime non-negative integers. Let $M$ be an isosimple Dieudonn\'e module of slope $n/(n+m)$. We define a compatible filtration of type $(n,m)$ on the $1$-truncated Dieudonn\'e module $Z\defeq M/pM$ as follows: 

By the slope assumption there exists an embedding $M\into H_{n,m}$. We choose such an embedding and let $M^j\defeq M\cap \pi^j H_{n,m}$ for all $j\geq 0$. The fact that $F=\pi^n \Phi$ and $V=\pi^m \Phi^{-1}$ on $H_{n,m}$ implies that $F(M^j)=M^{j+n}\cap F(M)$ and $V(M^j)=M^{j+m}\cap V(M)$ for all $j\in \BZ$. These two identities imply that $G^j(Z)\defeq (M^j+\epsilon M)/\epsilon M\subset Z$ defines a compatible filtration $E_M$ of type $(n,m)$ on $Z$. Since $M$ is isosimple, the vector spaces $Z$ and $\gr_E Z$ have rank $n+m$.

\end{construction}
\begin{remark}
  By \cite[Section 5.6]{JOPurity} a different choice of embedding $M\into H_{n,m}$ in Construction \ref{FilRedCons1} yields to a filtration which differs from the given one only by a shift of the indexing of the filtration.
\end{remark}
\begin{construction} \label{FilRedCons2}
Let $M$ be a Dieudonn\'e module and $Z\defeq M/\epsilon M$. Let $\CP$ be the Newton polygon of $M$. We define a compatible filtration on $Z$ as follows: We start with the slope filtration of $M$ (c.f. e.g. \cite[Corollary 13]{ZinkSF}) and refine it to a filtration $0=M_0\subset M_1\subset \hdots \subset M_N=M$ by sub-Dieudonn\'e modules such that each $M_i/M_{i-1}$ is isosimple. For $0\leq i\leq N$ let $Z_i\defeq M_i/\epsilon M_i$. Then Construction \ref{FilRedCons1} applied to the Dieudonn\'e modules $M_i/M_{i-1}$ yields compatible filtrations $E_i$ on $Z_i/Z_{i-1}\cong (M_i/M_{i-1})/\epsilon (M_i/M_{i-1})$. Alltogether we obtain a compatible filtration with Newton polygon $\CP$.
\end{construction}

\subsection{Lifts associated to compatible filtrations} \label{LiftConsSec}
In Construction \ref{FilRedCons2}, we associate to each Dieudonn\'e module $M$ a compatible filtration $E_M$ on $M/\epsilon M$ with the same Newton polygon as $M$. In this subsection we show that conversely, for each $1$-truncated Dieudonn\'e module $Z$ together with a compatible filtration $E$ on $Z$ with Newton polygon $\CP$ there exists a Dieudonn\'e module $M$ lifting $Z$ which has Newton polygon $\CP$.

\begin{construction} \label{LiftCons1}
 Let $\CP=(\nu_1,\hdots,\nu_N)$ be a Newton polygon. Let $Z$ be a $1$-truncated Dieudonn\'e module and $E=((Z_i)_{0\leq i\leq N},(E_i)_{1\leq i\leq N})$ a compatible filtration with Newton polygon $\CP$ on $Z$. We construct a Dieudonn\'e module $M$ lifting $Z$ as follows:

For each $1\leq i\leq N$ let $C_i$ be the beginning of a semi-module of type $(n_i,m_i)\defeq (n_{\nu_i},m_{\nu_i})$ associated to $\gr_{E_i}(Z_i/Z_{i-1})$. By Proposition \ref{ShortClassification1} we can choose isomorphisms $\gr_{E_i}(Z_i/Z_{i-1})\cong Z_{C_i}$ of graded $1$-truncated Dieudonn\'e modules and hence obtain bases $(e^i_j)_{j\in C_i}$ of the $\gr_{E_i}(Z_i/Z_{i-1})$. In the following by a pair $(i,j)$ we always mean such a pair satisfying $1\leq i\leq N$ and $j\in C_i$. For each pair $(i,j)$ let $f^i_j\in Z_i$ be a lift of $e^i_j$. 

Let $M$ be the free $\CO$-module with basis $(g_i^j)_{(i,j)}$. We make $M$ into a Dieudonn\'e module by defining the image of $g^i_j$ under $F$ and $V$ by a nested double induction, with the outer induction being increasing on $i$ and the inner induction being decreasing on $j$. For pairs $(i,j)$ and $(i',j')$ we let $(i,j)\prec (i',j')$ if and only if either the conditions $i=i'$ and $j>j'$ or the condition $i<i'$ is satisfied. 

First we define $F$: Consider a pair $(i,j)$. If $j+n_i \in C_i$ then $$F(f^i_j)=f^i_{j+n_i}+\sum_{(i',j')\prec (i,j+n_i)}a^{i'}_{j'}f^{i'}_{j'}$$ for certain $a^{i'}_{j'}\in k$. Then we let $$F(g^i_j)\defeq g^i_{j+n_i}+\sum_{(i',j')\prec (i,j+n_i)}[a^{i'}_{j'}]g^{i'}_{j'}.$$ Otherwise we have $j-m_i\in C_i$ and $$f^i_j=V(f^i_{j-m_i})+\sum_{(i',j')\prec (i,j)}b^{i'}_{j'}f^{i'}_{j'}$$ for certain $b^{i'}_{j'}\in k$. In this case we define $$F(f^i_j)\defeq \epsilon g^i_{j-m_i}+\sum_{(i',j')\prec (i,j)}[(b^{i'}_{j'})^p] F(g^{i'}_{j'}),$$ where the terms $F(g^{i'}_{j'})$ appearing are already defined by induction.

We define $V$ dually: Consider a pair $(i,j)$. If $j+m_i \in C_i$ then $$V(f^i_j)=f^i_{j+m_i}+\sum_{(i',j')\prec (i,j+m_i)}c^{i'}_{j'}f^{i'}_{j'}$$ for certain $c^{i'}_{j'}\in k$. Then we let $$V(g^i_j)\defeq g^i_{j+m_i}+\sum_{(i',j')\prec (i,j+m_i)}[c^{i'}_{j'}]g^{i'}_{j'}.$$ Otherwise we have $j-n_i\in C_i$ and $$f^i_j=F(f^i_{j-n_i})+\sum_{(i',j')\prec (i,j)}d^{i'}_{j'}f^{i'}_{j'}$$ for certain $d^{i'}_{j'}\in k$. In this case we define $$V(f^i_j)\defeq \epsilon g^i_{j-n_i}+\sum_{(i',j')\prec (i,j)}[(d^{i'}_{j'})^{-p}] V(g^{i'}_{j'}),$$ where the terms $V(g^{i'}_{j'})$ appearing are already defined by induction.

We extend $F$ and $V$ to a $\sigma$- respectively a $\sigma^{-1}$-linear endomorphism of $M$. 
\end{construction}
\begin{lemma} \label{SlopeLemma}
  Let $M$ be a Dieudonn\'e module and $n$ and $m$ non-negative integers such that the Newton polygon of $M$ has endpoint $(c n,c (n+m))$ for some integer $c\geq 0$. Assume that there exists a function $v\colon M\setminus \{0\} \to \BZ^{\geq 0}$ with the following properties:
  \begin{enumerate}[(i)]
  \item $v(F(x))=v(x)+n$ for all $x\in M$.
  \item $v(V(x))=v(x)+m$ for all $x\in M$.
  \end{enumerate}
Then $M$ is isoclinic of slope $n/(n+m)$.
\end{lemma}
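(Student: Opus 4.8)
The plan is to pass to the rational Dieudonné module $N\defeq M[1/\epsilon]$ and show that every slope of $N$ equals $n/(n+m)$; since the Newton polygon has endpoint $(cn, c(n+m))$ and the total multiplicity is the rank $c(n+m)$, any single-valued slope is forced to be $n/(n+m)$, and isoclinicity follows. To control the slopes I would use the valuation-like function $v$. First I would extend $v$ to $N\setminus\{0\}$ by $v(\epsilon^{-j}x)\defeq v(x)-j\cdot(n+m)$ for $x\in M\setminus\{0\}$, $j\geq 0$; this is well-defined because $v(\epsilon x)=v(FVx)=v(x)+n+m$ by properties (i) and (ii), and on $N$ it still satisfies $v(Fx)=v(x)+n$ and $v(Vx)=v(x)+m$. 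Note $v$ need not be a genuine valuation (it may fail additivity or the ultrametric inequality), but it only needs to be $F$- and $V$-equivariant in the stated way.

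Next I would argue by contradiction: suppose $N$ is not isoclinic of slope $n/(n+m)$, so by the Dieudonné–Manin classification there is a nonzero sub-isocrystal $N'\subset N$ of slope $r/s < n/(n+m)$ in lowest terms (or, symmetrically, one of strictly larger slope — handle both cases the same way). On such a piece, after replacing $N'$ by an isogenous lattice, $\epsilon^{-r}F^{s}$ is an automorphism. Now pick any nonzero $x\in N'$; iterating property (i) gives $v(F^{s}x)=v(x)+sn$, hence $v(\epsilon^{-r}F^{s}x)=v(x)+sn-r(n+m)$. Since $r/s<n/(n+m)$ we have $sn-r(n+m)>0$, so the operator $\epsilon^{-r}F^{s}$ strictly increases $v$ on all of $N'$. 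But $v$ takes values in $\BZ$ (indeed is bounded below on the compact lattice $M$ in each congruence class, or one simply observes $v\geq 0$ on $M$ and tracks the finitely many cosets), so an automorphism that strictly increases $v$ on a nonzero finite-dimensional space is impossible: iterating it on a fixed nonzero vector produces an infinite strictly increasing sequence of $v$-values within a finite-dimensional $L$-space whose lattice points have bounded $v$ below, yet the automorphism cannot leave that bounded region. This contradiction rules out slopes $<n/(n+m)$; the mirror argument with $V$ in place of $F$ (using property (ii), $v(V^{s}x)=v(x)+sm$) rules out slopes $>n/(n+m)$. Hence every slope is exactly $n/(n+m)$ and $M$ is isoclinic of that slope.

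The main obstacle is making the "strictly increasing under an automorphism is impossible" step rigorous, since $v$ is not literally a lattice-valuation and its fibers over $\BZ$ need not be subspaces. The clean fix is: restrict to the sub-isocrystal $N'$ and choose an $\CO$-lattice $\Lambda\subset N'$ stable under $\epsilon^{-r}F^{s}$ (possible because that operator is an automorphism of $N'$); then $v$ is bounded below on $\Lambda\setminus\{0\}$ in each of the finitely many $\epsilon$-adic "layers" $\Lambda\setminus\epsilon\Lambda$ up to the $(n+m)$-periodicity built into the definition of the extension of $v$, so $v$ attains a minimum on $\Lambda\setminus\{0\}$; but $\epsilon^{-r}F^{s}$ maps $\Lambda\setminus\{0\}$ into itself while strictly increasing $v$, contradicting minimality. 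This reduces the whole lemma to the elementary observation that a map from a set with a $\BZ$-valued function bounded below into itself cannot be strictly increasing on that function.
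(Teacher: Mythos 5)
Your overall strategy---using the equivariance of $v$ under $F$ and $V$ to constrain the slopes of the isoclinic constituents of $M[1/\epsilon]$---is the right one and is close in spirit to the paper's proof. But the step you yourself flag as ``the main obstacle'' is not repaired by your proposed fix, because the ``elementary observation'' you reduce everything to is false: a map from a set into itself can perfectly well strictly increase a $\BZ$-valued function that is bounded below (take $x\mapsto x+1$ on $\BZ^{\geq 0}$ with $v$ the identity). Boundedness below of $v$ on a forward-stable lattice gives no contradiction with an operator that \emph{increases} $v$; the minimality argument fails for the same reason, since the minimizer need not lie in the image of the operator. And in both of your cases (slopes $<n/(n+m)$ handled with $F$, slopes $>n/(n+m)$ handled with $V$) the operator you produce is the increasing one, so as written neither case closes.

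There are two easy repairs. (a) Keep your operators but upgrade ``stable'' to ``bijective on the lattice'': for an isoclinic constituent of slope $r/s$ one can choose $\Lambda$ with $\epsilon^{-r}F^s(\Lambda)=\Lambda$ (Dieudonn\'e--Manin), and then the minimizer $x_0\in\Lambda\setminus\{0\}$ has a preimage in $\Lambda\setminus\{0\}$ of strictly smaller $v$-value, which is the actual contradiction. (b) Swap the roles of $F$ and $V$ so that the relevant operator \emph{decreases} $v$: for $\nu=r/s>n/(n+m)$ the operator $\epsilon^{-r}F^{s}$ lowers $v$ by $r(n+m)-sn>0$, so iterating it on a fixed nonzero element of a forward-stable sublattice $M'\subset M$ drives $v$ below $0$, contradicting $v\geq 0$. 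Option (b) in one direction is exactly what the paper does: it writes $F^{a s}(x)=\epsilon^{a r}x'$ with $x'\in M'$, deduces $v(x)\geq a\bigl(r(n+m)-sn\bigr)$, lets $a\to\infty$ to get the one-sided bound $\nu\leq n/(n+m)$ for every slope, and then invokes the endpoint hypothesis to force equality---the hypothesis you mention at the outset but never actually use. Either repair works, but as it stands the proof has a genuine gap at its central step.
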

\begin{proof}
  Let $\nu$ be a slope of $M$. There exists a non-zero Dieudonn\'e submodule $M'$ of $M$ such that for all integers $a\geq 0$ we have $F^{a(n_{\nu}+m_{\nu})}M'=p^{an_{\nu}}M'$. Let $x$ be a non-zero element of $M'$. For some integer $a\geq 0$, write $F^{a(n_\nu+m_\nu)}(x)=p^{an_\nu}(x')$ for some $x'\in M'$. Then we get:
\begin{equation*}
  v(x)+a(n_\nu+m_\nu)n=v(F^{a(n_\nu+m_\nu)}(x))=v(p^{an_{\nu}}(x'))=v(x')+an_{\nu}(n+m)\geq an_{\nu}(n+m)
\end{equation*}
By letting $a$ go to infinity this inequality implies $\nu=n_{\nu}/(n_{\nu}+m_{\nu})\leq n/(n+m)$. From this the claim follows by comparing the Newton polygon of $M$ to the constant Newton polygon of slope $n/(n+m)$ with the same endpoint.

\end{proof}
\begin{proposition} \label{LiftConsProps1}
  Let $Z$ and $E$ be as in Construction \ref{LiftCons1}. For each $1\leq i\leq N$ let $M_i \subset M$ be the $\CO$-submodule spanned by $\{g^{i'}_j\mid i'\leq i, j\in C_{i'}\}$.
  \begin{enumerate}[(i)]
  \item The module $M$ from Construction \ref{LiftCons1} is a Dieudonn\'e module, i.e. $FV=VF=\epsilon$.
  \item The assigment $g^i_j+\epsilon M \mapsto f^i_j$ gives an isomorphism $M/\epsilon M\cong Z$ of $1$-truncated Dieudonn\'e modules.
  \item The $M_i$ are Dieudonn\'e submodules of $M$.
  \item For each $1\leq i\leq N$, the Dieudonn\'e module $M_i/M_{i-1}$ is isoclinic of slope $n_i/(n_i+m_i)$.
  \item The Dieudonn\'e module $M$ has Newton polygon $\CP$.
  \end{enumerate}
\end{proposition}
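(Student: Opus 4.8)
The plan is to prove the five assertions essentially in the stated order: (i) and (ii) together, by the nested double induction already used in Construction \ref{LiftCons1} to define $F$ and $V$; (iii) by inspecting which indices $(i',j')$ can occur in the defining formulas; (iv) by applying Lemma \ref{SlopeLemma} to each graded piece $M_i/M_{i-1}$; and finally (v) by combining (iii) and (iv) with the semisimplicity of the category of isocrystals over $k$.

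For (ii), I would reduce the defining equations modulo $\epsilon$. When $j+n_i\in C_i$ one gets $g^i_j+\epsilon M\mapsto f^i_{j+n_i}+\sum a^{i'}_{j'}f^{i'}_{j'}=F(f^i_j)$ at once; when $j-m_i\in C_i$ one gets $g^i_j+\epsilon M\mapsto \sum (b^{i'}_{j'})^p\,F(f^{i'}_{j'})$ using the induction hypothesis $\overline{F(g^{i'}_{j'})}=F(f^{i'}_{j'})$, and this equals $F(f^i_j)$ because applying $F$ to $f^i_j=V(f^i_{j-m_i})+\sum b^{i'}_{j'}f^{i'}_{j'}$ and using $FV=0$ on the $1$-truncated Dieudonn\'e module $Z$ gives precisely that expression; the verification for $V$ is symmetric, using $VF=0$ on $Z$. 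Since the $f^i_j$ form a $k$-basis of $Z$, this yields the isomorphism $M/\epsilon M\cong Z$ of (ii). For (i) I would carry the identities $FV(g^i_j)=VF(g^i_j)=\epsilon g^i_j$ along the same induction: the leading terms of $F(g^i_j)$ and $V(g^i_j)$ — namely $g^i_{j+n_i}$ or $\epsilon g^i_{j-m_i}$ for $F$, and the analogues for $V$ — reproduce the relation $FV=VF=\epsilon$ exactly as it holds on $H_{n_i,m_i}$, while the lower-order corrections are forced to cancel by the relations the $f^i_j$ satisfy in $Z$ (for instance the coefficients in the two expansions $F(f^i_j)=f^i_{j+n_i}+\sum a^{i'}_{j'}f^{i'}_{j'}$ and $f^i_{j+n_i}=F(f^i_j)+\sum d^{i'}_{j'}f^{i'}_{j'}$ satisfy $d^{i'}_{j'}=-a^{i'}_{j'}$). \textbf{This bookkeeping for (i) — checking that the specific Teichm\"uller lifts and the exponents $p$, $-p$ chosen in Construction \ref{LiftCons1} make the two compositions equal to $\epsilon$ term by term — is the step I expect to be the main obstacle}; the other four parts are comparatively formal.

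For (iii) I would induct on $i$. Inspecting Construction \ref{LiftCons1}, $F(g^i_j)$ and $V(g^i_j)$ are $\CO$-linear combinations of the $g^{i'}_{j'}$ with $i'\le i$ together with terms $F(g^{i'}_{j'})$ or $V(g^{i'}_{j'})$ with $(i',j')\prec(i,j)$; the former lie in $M_i$ by definition, and the latter lie in $M_{i'}\subseteq M_i$, either by the outer induction hypothesis when $i'<i$ or by the inner induction on decreasing $j$ when $i'=i$. Hence each $M_i$ is stable under $F$ and $V$; being the $\CO$-span of a subset of the chosen basis of $M$ it is a free direct summand, so a sub-Dieudonn\'e module, with $FV=VF=\epsilon$ on it by (i).

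Finally, for (iv) fix $i$ and set $\bar M_i:=M_i/M_{i-1}$, a free $\CO$-module on $(\bar g^i_j)_{j\in C_i}$, and apply Lemma \ref{SlopeLemma} with $(n,m)=(n_i,m_i)$. Its Newton polygon hypothesis amounts to $\bar M_i$ having rank $n_i+m_i$ and $\bar M_i/F\bar M_i$ having length $n_i$, which holds because $C_i$ has exactly $n_i$ elements $j$ with $j-m_i\in C_i$ and $m_i$ elements with $j+n_i\in C_i$. For the function of Lemma \ref{SlopeLemma} I would take $v\bigl(\sum_{j\in C_i}c_j\bar g^i_j\bigr):=\min_{j}\bigl((n_i+m_i)\,v_\epsilon(c_j)+j+K\bigr)$ with $K:=-\min C_i$, where $v_\epsilon$ is the $\epsilon$-adic valuation; this takes values in $\BZ^{\ge 0}$, and the identities $v(Fx)=v(x)+n_i$ and $v(Vx)=v(x)+m_i$ follow because the leading term of $F(\bar g^i_j)$ (resp.\ $V(\bar g^i_j)$) has exactly the required $v$-value, each correction term either dies in $\bar M_i$ or involves a $\bar g^i_{j'}$, $F(\bar g^i_{j'})$ or $V(\bar g^i_{j'})$ of strictly larger $v$, and distinctness of residues modulo $n_i+m_i$ of the elements of $C_i$ prevents cancellation of leading terms. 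Lemma \ref{SlopeLemma} then gives that $\bar M_i$ is isoclinic of slope $n_i/(n_i+m_i)=\nu_i$, proving (iv). For (v), the filtration $0=M_0\subset\dots\subset M_N=M$ by sub-Dieudonn\'e modules with isoclinic subquotients of slopes $\nu_1,\dots,\nu_N$ splits after inverting $\epsilon$, since the category of isocrystals over the algebraically closed field $k$ is semisimple; hence the multiset of Newton slopes of $M$ is the union of those of the $M_i/M_{i-1}$, which is exactly $\CP=(\nu_1,\dots,\nu_N)$.
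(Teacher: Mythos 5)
Your proof follows essentially the same route as the paper's: parts (i)--(iii) by the nested double induction of Construction \ref{LiftCons1}, part (iv) by applying Lemma \ref{SlopeLemma} to $M_i/M_{i-1}$ with the valuation $\min_j\bigl((n_i+m_i)v_\epsilon(c_j)+j\bigr)$ (the paper uses the identical function, without your normalizing shift $K$), and (v) deduced from (iv). The paper merely asserts these steps with almost no detail, so your write-up is a correct elaboration of the same argument.
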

\begin{proof}
  $(i)$, $(ii)$ and $(iii)$ follow from the definition of $M$ by the same double induction as in Construction \ref{LiftCons1}.

$(iv)$: We continue to use the notation from Construction \ref{LiftCons1}. For $j\in C_i$ we denote $f^i_j+M_{i-1}$ by $\bar f^i_j$. These elements form a $\CO$-basis of $M_i/M_{i-1}$. We define a function $$v\colon M_i/M_{i-1}\setminus \{0\} \to \BZ^{\geq 0}$$ by 
\begin{equation*}
  v(\sum_{j\in C_i}a_j \bar f^i_j)\defeq \min_{j\in C_i}((n_i+m_i)v(a_j)+j).
\end{equation*}
It follows from the definition of $M$ that $v$ satisfies the conditions of Lemma \ref{SlopeLemma} for $n=n_i$ and $m=m_i$. Thus $(iv)$ follows from Lemma \ref{SlopeLemma}.

$(v)$ follows from $(iv)$.

\end{proof}

\section{Existence of compatible flags} \label{MainThmSect}
Let $\CP=(\nu_1,\hdots,\nu_N)$ be a Newton polygon. For $1\leq i\leq N$ we denote $(n_{\nu_i},m_{\nu_i})$ by $(n_i,m_i)$ and let $h_i\defeq n_i+m_i$ and $d_i\defeq m_i$. For such $i$ we let $G_i, T_i, W_i, \CI_i,\mu_i$, etc., be the data from Subsection \ref{sec:setup} associated to $(h,d)=(h_i,d_i)$. Let $h=\sum_i h_i$ and $\prod_{1\leq i\leq N}G_i\cong H\subset G=\GL_h$ be the Levi subgroup containing $T$ corresponding to the decomposition $h=h_1+\hdots+h_N$. We denote by $\tilde W_{H}\defeq H(W(k))\cap \tilde W$ (resp. $W_H$) the extended Weyl group (resp. the Weyl group) of $H$. Let $d\defeq \sum_{1\leq i\leq N}n_i$.

\begin{definition}
  Let $\lambda\in X_*(T)$. There is a unique permutation $\eta\in W_{H}$ with the following properties:
  \begin{enumerate}[(i)]
  \item For each $1\leq i\leq N$ we have $\lambda_{\eta(h_1+\hdots+h_{i-1}+1)}\leq \lambda_{\eta(h_1+\hdots+h_{i-1}+2)} \leq \hdots \leq \lambda_{\eta(h_1+\hdots+h_i)}$.
  \item For each $1\leq j,j' \leq h$ such that $\lambda_{j}=\lambda_{j'}$ we have $j<j'$ if and only if $\eta(j)<\eta(j')$.
  \end{enumerate}
We denote this permutation $\eta$ by $\eta_\lambda$.
\end{definition}

\begin{definition}
  Let $x_\CP\in \tilde W_{H}$ be the matrix whose $i$-th block is given by $x_{n_i,m_i}$ for each $1\leq i\leq N$.
\end{definition}
\begin{theorem} \label{CompFlagExists}
  Let $M$ be a Dieudonn\'e module with Hodge polygon given by $\mu$. The following are equivalent:
  \begin{enumerate}[(i)]
  \item On the truncation $Z=M/\epsilon M$ there exists a compatible filtration with Newton polygon $\CP$.
  \item There exists $\lambda\in X_*(T)$ satisfying $\epsilon^{-\lambda}x_\CP\epsilon^\lambda\in W\epsilon^\mu W$ such that the matrix of $F\colon M\to M$ with respect to some $\CO$-basis of $M$ lies in $\CI\eta_\lambda^{-1}\epsilon^{-\lambda}x_\CP\epsilon^\lambda\eta_\lambda \CI$.
  \end{enumerate}
\end{theorem}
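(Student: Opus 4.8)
The plan is to reduce the statement about compatible filtrations with a given Newton polygon to a statement about compatible filtrations of a fixed type $(n,m)$, i.e. to a ``refinement'' of Theorem \ref{CompFlagExistsIntro}, and then to patch the block-wise data together using the Levi subgroup $H$. More precisely, I would first show that on $Z = M/\epsilon M$ a compatible filtration with Newton polygon $\CP$ exists if and only if $M$ admits an $\CO$-basis adapted to a flag $0 = M_0 \subset M_1 \subset \dots \subset M_N = M$ of Dieudonn\'e submodules, with $M_i/M_{i-1}$ isoclinic of slope $\nu_i$, such that the induced compatible filtration of type $(n_i,m_i)$ on $(M_i/M_{i-1})/\epsilon(M_i/M_{i-1})$ is precisely the one from Construction \ref{FilRedCons1}. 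The ``only if'' direction here is Construction \ref{LiftCons1} together with Proposition \ref{LiftConsProps1}: given a compatible filtration $E$ on $Z$ with Newton polygon $\CP$ one builds a lift $M'$ of $Z$ with Newton polygon $\CP$, filtered by the $M'_i$, and then invokes the fact (Subsection \ref{TruncClassSect}) that two lifts of $Z$ with Hodge polygon $\mu$ are $G(\CO)$-$\sigma$-conjugate by an element of $G(\CO)_1$, so one can transport the filtration and the adapted basis from $M'$ back to $M$. The ``if'' direction is Construction \ref{FilRedCons2}.

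Next I would make the block-wise input concrete. For each $i$, by Proposition \ref{ShortClassification1} and Proposition \ref{ShortClassification2}, choosing a compatible filtration of type $(n_i,m_i)$ on the truncation of an isoclinic-of-slope-$\nu_i$ piece is the same as choosing a cocharacter $\lambda^{(i)} \in X_*(T_i)$ with $\epsilon^{-\lambda^{(i)}} x_{n_i,m_i} \epsilon^{\lambda^{(i)}} \in W_i \epsilon^{\mu_i} W_i$; and the key computation underlying Theorem \ref{CompFlagExistsIntro} is that, with respect to a basis reordered by $\eta_{\lambda^{(i)}}$ so that the filtration becomes the standard decreasing one, the matrix of $F$ on the corresponding piece lies in $\CI_i\,\eta_{\lambda^{(i)}}^{-1}\epsilon^{-\lambda^{(i)}} x_{n_i,m_i} \epsilon^{\lambda^{(i)}} \eta_{\lambda^{(i)}}\,\CI_i$, and conversely any $F$ whose matrix lies in this double coset arises this way. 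Assembling the $\lambda^{(i)}$ into a single $\lambda \in X_*(T)$ (block by block), the matrix $x_\CP$ is by definition the block-diagonal assembly of the $x_{n_i,m_i}$, and $\eta_\lambda \in W_H$ restricts to the correct reordering within each block (condition (ii) in the definition of $\eta_\lambda$ is exactly what guarantees $\eta_\lambda$ lies in $W_H$ and induces $\eta_{\lambda^{(i)}}$ on block $i$). The Iwahori double coset $\CI_i \cdots \CI_i$ inside $G_i(\CO)$ is the image of $\CI \cdots \CI$ inside $H(\CO)$, and one must check compatibility of the Iwahori subgroups under $H \subset G$, which follows since $\CI$ is the preimage of $B(k)$ and $B \cap H$ is the standard Borel of $H$.

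The remaining point is to handle the interaction between the blocks — the off-diagonal entries of the matrix of $F$ with respect to a basis adapted to the flag. The matrix of $F$ relative to an adapted basis is block upper-triangular (because $F(M_i) \subseteq M_i$), with diagonal blocks as above; I would argue that the upper-triangular unipotent part can be absorbed into the Iwahori $\CI$ on the left. Concretely, after conjugating by $\eta_\lambda$ the flag $M_\bullet$ is still a flag of ``coordinate'' submodules (since $\eta_\lambda \in W_H$ preserves the block decomposition), the matrix of $F$ is block upper-triangular with diagonal blocks in the required $\CI_i$-double cosets, and an element of $G(\CO)$ which is block upper-triangular with the identity on the diagonal and whose strictly-upper-triangular part has $\CO$-entries lies in $\CI$; multiplying on the left absorbs it. Hence the matrix of $F$ lies in $\CI\,\eta_\lambda^{-1}\epsilon^{-\lambda}x_\CP\epsilon^\lambda\eta_\lambda\,\CI$, and the condition $\epsilon^{-\lambda}x_\CP\epsilon^\lambda \in W\epsilon^\mu W$ is the assembly of the block conditions $\epsilon^{-\lambda^{(i)}}x_{n_i,m_i}\epsilon^{\lambda^{(i)}} \in W_i\epsilon^{\mu_i}W_i$ together with the fact that the Hodge polygon $\mu$ of $M$ is the concatenation of the $\mu_i$. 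The converse direction — given such a $\lambda$, produce the flag and the filtration — runs the same construction backwards: the double-coset membership lets one choose an adapted basis exhibiting the block-upper-triangular form, the diagonal blocks give isoclinic sub-Dieudonn\'e modules of the right slopes, and Construction \ref{FilRedCons1} on each block gives the compatible filtration on $Z$.

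\textbf{Main obstacle.} I expect the genuinely delicate step to be the absorption of the off-diagonal (inter-block) contributions into the Iwahori subgroup while simultaneously keeping track of the reordering by $\eta_\lambda$: one must verify that conjugation by $\eta_\lambda$ does not move a strictly-upper-triangular $\CO$-entry into a position with negative valuation constraint, i.e. that $\eta_\lambda^{-1}(\text{block-upper-triangular unipotent with }\CO\text{-entries})\eta_\lambda$ is still in $\CI$ after the required bookkeeping; this rests on the precise combinatorics in the definition of $\eta_\lambda$ (the tie-breaking rule (ii)) and on $\eta_\lambda \in W_H$. The other subtle point is making rigorous the ``two lifts of $Z$ are conjugate by $G(\CO)_1$, hence a filtration on one transports to the other'' argument — one has to check that the transporting element can be chosen to also respect the flag, which again uses that $G(\CO)_1$-$\sigma$-conjugacy is an equivalence relation compatible with passing to subquotients.
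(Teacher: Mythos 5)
Your block-wise assembly via $H$, the role of $\eta_\lambda\in W_H$, and the absorption of the block-upper-triangular inter-block terms into $\CI$ all match the paper's actual argument. But your opening reduction contains a genuine error that the rest of the plan inherits. You claim that $Z=M/\epsilon M$ carries a compatible filtration with Newton polygon $\CP$ if and only if $M$ \emph{itself} admits a flag of Dieudonn\'e submodules with isoclinic subquotients of slopes $\nu_i$. The forward direction is false: such a flag would force $M$ to have Newton polygon $\CP$, whereas the theorem only assumes $M$ has Hodge polygon $\mu$, and the whole point (see the application in Theorem \ref{MainTheorem}, where $M=M_{ww_0w_{0,I}\epsilon^\mu}$) is that the truncation can carry a compatible filtration with Newton polygon $\CP$ even when $M$ has a different Newton polygon. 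Your proposed repair via Subsection \ref{TruncClassSect} misquotes it: two lifts of the same $Z$ are \emph{not} $G(\CO)$-$\sigma$-conjugate (that would make them isomorphic as Dieudonn\'e modules); their matrices are related by $\sigma$-conjugacy combined with two-sided multiplication by $G(\CO)_1$. That relation preserves the $\CI$-double coset of the matrix of $F$ (since $G(\CO)_1\subset\CI$) — which is exactly why condition $(ii)$ is well-posed — but it does not transport a flag with isoclinic subquotients from the lift $M'$ of Construction \ref{LiftCons1} onto $M$. The same conflation reappears in your converse direction, where you assert that the diagonal blocks of $(ii)$ give isoclinic submodules of $M$ and then invoke Construction \ref{FilRedCons1}; in fact $M_i/M_{i-1}$ need only have Hodge polygon $\mu_i$, not be isoclinic.

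The paper avoids this by never putting an isoclinic flag on $M$. For $(i)\Rightarrow(ii)$ it takes the preimages $M_i\subset M$ of the $Z_i$ (these are Dieudonn\'e submodules, but their subquotients are not claimed to be isoclinic), uses Proposition \ref{ShortClassification2} to identify $\gr_{E_i}(Z_i/Z_{i-1})$ with some $Z_{\lambda^i}$, lifts the resulting graded bases through $Z_i$ to $M_i$, and reads off directly that the matrix of $F$ on $M_i/M_{i-1}$ lies in $\leftexp{\eta_{\lambda^i}}{\CI_i}\,\epsilon^{-\lambda^i}x_{n_i,m_i}\epsilon^{\lambda^i}$; the converse reverses this comparison with $Z_{\lambda^i}$ to build the filtration on $Z_i/Z_{i-1}$ without any slope computation on $M$. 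To salvage your plan you would need to replace the flag-transport step by this direct matrix computation on $M$ (or, equivalently, compute the double coset for the lift $M'$ and then argue that two-sided $G(\CO)_1$-multiplication preserves it); as written, the reduction you start from is not available.
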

\begin{proof}
Using $\sigma$-conjugation by elements of $G(\CO)$, which amounts to base change on $M$, one sees that $(ii)$ is equivalent to saying that there exists such a $\lambda$ such that the matrix of $F$ with respect to some $\CO$-basis of $M$ lies in $\leftexp{n_\lambda}{\CI}\epsilon^{-\lambda}x_\CP\epsilon^\lambda$. 

$(i)\implies (ii)$: Let $E=((Z_i)_{0\leq i\leq N},(E_i)_{1\leq i\leq N})$ be a compatible filtration of Newton polygon $\CP$ on $Z$. Fix $1\leq i\leq N$. By Proposition \ref{ShortClassification2} there exists $\lambda^i\in X_*(T_i)$ satisfying $\epsilon^{-\lambda^i}x_{n_i,m_i}\epsilon^{\lambda^i}\in W_i\epsilon^{\mu_i} W_i$ such that $\gr_{E_i}(Z_i/Z_{i-1})\cong Z_{\lambda^i}$. Let $M^i$ and $(f^i_j)_{1\leq i\leq h_i}$ be the Dieudonn\'e module together with its $\CO$-basis from Construction \ref{ZlambdaCons} applied to $\lambda=\lambda^i$ such that $Z_{\lambda^i}=M^i/\epsilon M^i$ and the matrix of $F\colon M^i\to M^i$ with respect to $(f^i_j)_{1\leq j\leq h_i}$ is $\epsilon^{-\lambda^i}x_{n_i,m_i}\epsilon^{\lambda^i}$. Fix an isomorphism $M^i/\epsilon M^i\cong \gr_{E_i}(Z_i/Z_{i-1})$ and let $(\bar f^i_j)_{1\leq j\leq h_i}$ be the image of $(f^i_j)_{1\leq j\leq h_i}$ in $\gr_{E_i}(Z_i/Z_{i-1})$. Let $M_i$ be the preimage of $Z_i$ in $M$ and for $1\leq j\leq h_i$ let $\tilde f^i_j$ be lift of $\bar f^i_j$ to $Z_i$ and $g^i_j$ a lift of $\tilde f^i_j$ to $M_i$. 

By comparing the definition of $Z_{\lambda^i}$ and $\eta_{\lambda^i}$ one sees that the subspaces appearing in the filtration $E_i$ on $Z_i/Z_{i-1}$ are those of the form $\sum_{1\leq j'\leq j}k\tilde f^i_{\eta_{\lambda^i}(j')}+Z_{i-1}$ for $1\leq j\leq h_i$. This together with the fact that the matrix of $F\colon M^i\to M^i$ with respect to $(f^i_j)_{1\leq j\leq h_i}$ is $\epsilon^{-\lambda_i}x_{n_i,m_i}\epsilon^{\lambda_i}$ implies that the matrix of $F\colon M_i/M_{i-1}\to M_i/M_{i-1}$ with respect to the basis $(g^i_j)_{1\leq j\leq h_i}$ lies in $\leftexp{\eta_{\lambda_i}}{\CI} \epsilon^{-\lambda_i}x_{n_i,m_i}\epsilon^{\lambda_i}$.

Now let $\lambda\in X_*(T)$ be the cocharacter whose factor in the $i$-th block of $H$ is given by $\lambda^i$ for each $1\leq i\leq N$. From the definition of $x_\CP$ and the corresponding property of the $\lambda^i$ it follows that $\epsilon^{-\lambda}x_\CP\epsilon^\lambda\in W\epsilon^\mu W$. Furthermore, from the definition of $\eta_\lambda$ and the above it follows that the matrix of $F\colon M\to M$ with respect to the $\CO$-basis $(f^i_j)_{i,j}$ lies in $\leftexp{\eta_\lambda}{\CI}\epsilon^{-\lambda}x_\CP\epsilon^\lambda$. This proves $(ii)$.

$(ii)\implies (i)$: We reverse the above arguments: By assumption there exists a $\CO$-basis of $M$ with respect to which the matrix of $F$ lies in $\leftexp{\eta_\lambda}{\CI}\epsilon^{-\lambda}x_\CP\epsilon^\lambda$. Write such a basis as $(f^1_1,f^1_2,\hdots, f^1_{h_i}, f^2_1,\hdots,f^N_{h_N})$. For $1\leq i\leq N$ let $M_i\defeq \sum_{i'\leq i, j}\CO f^{i'}_j$ and $Z_i$ the image of $M_i$ in $Z$. The form of the matrix of $F$ with respect to the basis $(f^i_j)_{(i,j)}$ implies that $F(M_i)\subset M_i$ for each $i$. Fix $1\leq i\leq N$. Let $\lambda^i$ (resp. $\eta_{\lambda^i}$) be the part of $\lambda$ (resp. $\eta_{\lambda}$) in $G_i$. Then the matrix of $F$ on $M_i/M_{i-1}$ with respect to $(g^i_j)_{1\leq j\leq h_i}$ lies in $\leftexp{\eta_{\lambda^i}}{\CI_i}\epsilon^{-\lambda^i}x_{n_i,m_i}\epsilon^{\lambda^i}$ which proves that $M_i/M_{i-1}$ is a Dieudonn\'e module with Hodge polygon given by $\mu_i$ and hence that $Z_i/Z_{i-1}$ is a $1$-truncated Dieudonn\'e module of rank $h_i$.

 For $1\leq j\leq h_i$ let $\tilde f^i_j$ be the image of $g^i_j$ in $Z_i$. As above we consider the graded $1$-truncated Dieudonn\'e module $Z_{\lambda^i}$ with its canonical basis $(\bar f^i_j)_{1\leq i\leq j}$. Let $(G^j(Z_{\lambda^i}))_{j\in \BZ}$ be the canonical filtration of type $(n_i,m_i)$ associated to the grading on $Z_{\lambda^i}$. For $j\in \BZ$ define $G^j(Z_i/Z_{i-1})\defeq \sum_{\{j'\colon \bar f^i_{j'}\in G^j(Z_{\lambda^i})\}}k\tilde f^i_{j'}$. Similar to the above one checks by comparison with $Z_{\lambda^i}$ that this defines a compatible filtration $E_i$ of type $(n_i,m_i)$ on $Z_i/Z_{i-1}$. Alltogether we have constructed a compatible filtration with Newton polygon $\CP$ on $Z$.

\end{proof}

Now we can prove our main result:
\begin{theorem} \label{MainTheorem}
  Let $w\in \leftexp{I}{W}$. The following are equivalent:
  \begin{enumerate}[(i)]
  \item The $1$-truncated Dieudonn\'e module $Z_w$ admits a lift with Newton polygon $\CP$.
  \item On $Z_w$ there exists a compatible filtration with Newton polygon $\CP$.
  \item There exists $\lambda\in X_*(T)$ satisfying $\epsilon^{-\lambda}x_\CP\epsilon^\lambda\in W\epsilon^\mu W$ such that $ww_0w_{0,I}\epsilon^\mu$ is $G(\CO)$-$\sigma$-conjugate to an element of $\CI \eta_\lambda^{-1}\epsilon^{-\lambda}x_\CP\epsilon^\lambda\eta_\lambda \CI$.
  \item There exist $\lambda\in X_*(T)$ satisfying $\epsilon^{-\lambda}x_\CP\epsilon^\lambda\in W\epsilon^\mu W$ as well as $y\in W$ such that $ww_0w_{0,I}\epsilon^\mu\in \CI y\CI \eta_\lambda^{-1}\epsilon^{-\lambda}x_\CP\epsilon^\lambda\eta_\lambda \CI y^{-1} \CI$.
  \end{enumerate}
\end{theorem}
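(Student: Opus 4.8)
The plan is to prove the chain of equivalences $(i)\Leftrightarrow(ii)\Leftrightarrow(iii)\Leftrightarrow(iv)$ by stringing together the results already established in the paper and adding a routine translation at the end. The equivalence $(i)\Leftrightarrow(ii)$ is essentially the content of Subsections~\ref{CompFilConsSec} and \ref{LiftConsSec}: Construction~\ref{FilRedCons2} produces from a lift $M$ of $Z_w$ with Newton polygon $\CP$ a compatible filtration with Newton polygon $\CP$ on $M/\epsilon M\cong Z_w$, giving $(i)\implies(ii)$; conversely, Construction~\ref{LiftCons1} together with Proposition~\ref{LiftConsProps1} produces from such a compatible filtration a Dieudonn\'e module $M$ with $M/\epsilon M\cong Z_w$ and Newton polygon $\CP$, giving $(ii)\implies(i)$. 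I would simply cite these and spell out that the isomorphism class of $Z_w$ is what is being matched.

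Next, $(ii)\Leftrightarrow(iii)$ is obtained by applying Theorem~\ref{CompFlagExists} to a suitable lift $M$ of $Z_w$. Here one must be slightly careful: Theorem~\ref{CompFlagExists} refers to a Dieudonn\'e module $M$ with Hodge polygon given by $\mu$, and condition $(ii)$ of Theorem~\ref{CompFlagExists} involves the matrix of $F$ with respect to \emph{some} $\CO$-basis of $M$ lying in $\CI\eta_\lambda^{-1}\epsilon^{-\lambda}x_\CP\epsilon^\lambda\eta_\lambda\CI$. By the dictionary in Subsection~\ref{sec:setup}, choosing a basis is the same as $\sigma$-conjugating the element $g\in G(\CO)\epsilon^\mu G(\CO)$ representing $M$; and by Subsection~\ref{TruncClassSect}, the $1$-truncated module $Z_w$ corresponds to the $G(\CO)$-$\sigma$-conjugacy class of $ww_0w_{0,I}\epsilon^\mu$ up to the ambiguity of multiplying by $G(\CO)_1$ on both sides. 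So I would fix a lift $M$ of $Z_w$, note that its representing element $g$ is $G(\CO)$-$\sigma$-conjugate to something in $G(\CO)_1\,ww_0w_{0,I}\epsilon^\mu\,G(\CO)_1$, and observe that whether $F$ can be brought into the double coset $\CI\eta_\lambda^{-1}\epsilon^{-\lambda}x_\CP\epsilon^\lambda\eta_\lambda\CI$ by some basis change is exactly the statement that $ww_0w_{0,I}\epsilon^\mu$ is $G(\CO)$-$\sigma$-conjugate to an element of that double coset (since $G(\CO)_1\subset\CI$, the residual $G(\CO)_1$-ambiguity is absorbed). This bookkeeping, matching ``exists a basis'' against ``$\sigma$-conjugate into the double coset'', is where I expect the only real subtlety to lie, and it is the step I would write out most carefully.

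Finally, $(iii)\Leftrightarrow(iv)$ is a purely group-theoretic reformulation of ``$G(\CO)$-$\sigma$-conjugate into a given $\CI$-double coset''. Since $\sigma$ acts trivially on $\tilde W$ (as $x_\CP$ and $\epsilon^\mu$ have entries in $\{0\}\cup p^\BZ$, which $\sigma$ fixes) and $G(\CO)=\bigsqcup_{y\in W}\CI y\CI$ by the Bruhat decomposition of $G(k)$ lifted to $G(\CO)$, one rewrites ``$\exists\,b\in G(\CO)$ with $b\,(ww_0w_{0,I}\epsilon^\mu)\,\sigma(b)^{-1}\in\CI\eta_\lambda^{-1}\epsilon^{-\lambda}x_\CP\epsilon^\lambda\eta_\lambda\CI$'' by decomposing $b\in\CI y\CI$; since $\sigma(b)\in\CI y\CI$ as well (the Bruhat cell is $\sigma$-stable, $\sigma$ acting trivially on $W$), one gets $ww_0w_{0,I}\epsilon^\mu\in\CI y^{-1}\CI\,\bigl(\CI\eta_\lambda^{-1}\epsilon^{-\lambda}x_\CP\epsilon^\lambda\eta_\lambda\CI\bigr)\,\CI y\CI$, which after renaming $y\mapsto y^{-1}$ is condition $(iv)$. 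Conversely $(iv)\implies(iii)$ is immediate by choosing $b\in\CI y\CI$. I would present this as a short lemma-style paragraph, taking care that $\CI$ is $\sigma$-stable (it is, being the preimage of $B(k)$, which is defined over the prime field) and that absorbing $\CI$-factors on the left and right of the double coset is harmless.
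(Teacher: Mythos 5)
Your steps $(i)\Leftrightarrow(ii)$ and $(ii)\Leftrightarrow(iii)$ follow the paper's route. (For the latter the paper takes a shortcut you could also use: it applies Theorem \ref{CompFlagExists} directly to the canonical lift $M_{ww_0w_{0,I}\epsilon^\mu}$, whose truncation \emph{is} $Z_w$ by definition, so that ``matrix of $F$ in some basis'' translates verbatim into ``$G(\CO)$-$\sigma$-conjugate'' with no residual ambiguity; your version with an arbitrary lift also works, but only because of the $G(\CO)_1$-absorption via normality that you gesture at.) The genuine gap is in $(iv)\implies(iii)$, which you declare ``immediate by choosing $b\in\CI y\CI$''. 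It is not. Write $D\defeq\CI\eta_\lambda^{-1}\epsilon^{-\lambda}x_\CP\epsilon^\lambda\eta_\lambda\CI$ and $x'\defeq ww_0w_{0,I}\epsilon^\mu$. Condition $(iv)$ gives a factorization $x'=adc$ with $a\in\CI y\CI$, $d\in D$, $c\in\CI y^{-1}\CI$, whereas condition $(iii)$ demands that the right-hand factor be $\sigma(a)^{-1}$ rather than an unrelated element of the same double coset. Membership in the product of sets $\CI y\CI\cdot D\cdot\CI y^{-1}\CI$ is a priori strictly weaker than $\sigma$-conjugacy into $D$: taking $b=a$ only yields $a^{-1}x'\sigma(a)=dc\sigma(a)\in D\,\CI y^{-1}\CI y\CI$, which need not lie in $D$ since $y^{-1}\CI y\not\subset\CI$ in general. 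Converting two-sided double-coset membership into $\sigma$-conjugacy is exactly the nontrivial content of this implication.

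The paper closes this gap with \cite[Theorem 1.1]{ViehmannTruncations}. Concretely: since $D$ absorbs $\CI$ on both sides, $(iv)$ says $x'=i_1yd'y^{-1}i_2$ with $i_1,i_2\in\CI$ and $d'\in D$, so $yd'y^{-1}\in\CI x'\CI$ is $G(\CO)$-$\sigma$-conjugate (by $y^{-1}$, using $\sigma(y)=y$) to $d'\in D$. Viehmann's theorem then says that every element of $\CI x'\CI$ is $G(\CO)$-$\sigma$-conjugate to an element of $G(\CO)_1x'G(\CO)_1$, and the normality of $G(\CO)_1$ in $G(\CO)$ together with $G(\CO)_1DG(\CO)_1=D$ transfers the conclusion from $G(\CO)_1x'G(\CO)_1$ to $x'$ itself, giving $(iii)$. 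This is the same absorption mechanism you already rely on in your $(ii)\Leftrightarrow(iii)$ step; you must invoke it (or something equivalent) here as well, and without it the implication $(iv)\implies(iii)$ is unproved.
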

\begin{proof}
  The implication $(i)\implies (ii)$ follows from Construction \ref{FilRedCons2}. The implication $(ii)\implies (i)$ follows from Proposition \ref{FilRedCons2}. The equivalence of $(ii)$ and $(iii)$ is a reformulation of Theorem \ref{CompFlagExists} applied to the Dieudonn\'e module $M_{ww_0w_{0,I}\epsilon^\mu}$. 

The implication $(iii)\implies (iv)$ follows from the decomposition $G(\CO)=\coprod_{y\in W}\CI y \CI$. If $(iv)$ holds, there exists an element of $\CI ww_0w_{0,I}\epsilon^\mu \CI$ which is $G(\CO)$-$\sigma$-conjugate to an element of $\CI \eta_\lambda^{-1}\epsilon^{-\lambda}x_\CP\epsilon^\lambda\eta_\lambda \CI$. By \cite[Theorem 1.1]{ViehmannTruncations}, each element of $\CI ww_0w_{0,I}\epsilon^\mu \CI$ is $G(\CO)$-$\sigma$-conjugate to an element of $G(\CO)_1 ww_0w_{0,I}\epsilon^\mu G(\CO)_1$. Using the fact that $G(\CO)_1$ is normal in $G(\CO)$ this implies $(iii)$.
\end{proof}

Let $\CZ$ be the center of $H$. Then $X_*(\CZ)$ acts on the set
\begin{equation*}
  X_*(T)^\CP\defeq \{\lambda\in X_*(T)\mid \epsilon^{-\lambda}x_{\CP}\epsilon^\lambda\in W\epsilon^\mu W\}
\end{equation*}
by addition. 
\begin{lemma} \label{LambdaFiniteness}
  This action on $X_*(T)^\CP$ has finitely many orbits.
\end{lemma}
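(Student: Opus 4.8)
The plan is to unwind the condition $\epsilon^{-\lambda}x_\CP\epsilon^\lambda\in W\epsilon^\mu W$ into an explicit statement about the integer vector $\lambda=(\lambda_1,\hdots,\lambda_h)$ and then observe that the set of such $\lambda$ is cut out, inside $X_*(T)\cong\BZ^h$, by finitely many inequalities together with a parity/residue condition — equivalently, that it is a finite union of translates of the sublattice spanned by the cocharacters coming from the centers of the blocks, which is exactly $X_*(\CZ)$. First I would reduce to the isoclinic case: since $x_\CP$ and $\mu$ are block-diagonal with blocks $x_{n_i,m_i}$ and $\mu_i$, and $W$, $W\epsilon^\mu W$ decompose accordingly only up to the shuffling governed by $\eta_\lambda$, one needs to be slightly careful, but the upshot is that the condition on $\lambda$ is equivalent to a conjunction of conditions on the blocks $\lambda^i$ of $\lambda$, plus a condition linking the blocks. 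I would therefore first treat a single block, i.e. prove the statement when $\CP$ is straight of slope $n/(n+m)$ and $H=G$, and then assemble.

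For a single block the key is Construction \ref{ZlambdaCons} and Proposition \ref{ShortClassification2}: the set $\{\lambda\mid \epsilon^{-\lambda}x_{n,m}\epsilon^\lambda\in W\epsilon^\mu W\}$ is in bijection with the (finite) set of beginnings of semi-modules of type $(n,m)$, via $\lambda_j = $ the unique integer with $h+1-j+h\lambda_j\in C$. Now the center $\CZ$ of $G=\GL_h$ gives $X_*(\CZ)=\BZ\cdot(1,1,\hdots,1)$, and adding $(1,\hdots,1)$ to $\lambda$ replaces $C$ by $C+h$; since a beginning of a semi-module is only well-defined up to the translation $C\mapsto C+(n+m)\BZ$ in each residue class — wait, that is not quite it, as $C$ itself is a specific subset, not a class. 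The correct statement is: two cocharacters $\lambda,\lambda'$ give isomorphic $Z_\lambda\cong Z_{\lambda'}$ as \emph{graded} $1$-truncated Dieudonn\'e modules iff $C_\lambda=C_{\lambda'}$, i.e. iff $\lambda=\lambda'$ (the grading records $\lambda$ exactly, as noted in the proof of Proposition \ref{ShortClassification2}). So on the nose the map $\lambda\mapsto C_\lambda$ is injective with finite image, hence $X_*(T)^\CP$ is itself finite in the single-block case, and the $X_*(\CZ)$-action has finitely many (in fact, that many) orbits trivially — except that this would make $X_*(\CZ)$ act freely on a finite set, which forces $X_*(\CZ)$ to act trivially, which is false since $\lambda\mapsto\lambda+(1,\hdots,1)$ changes $C$ by $C\mapsto C+h$ and hence changes $\lambda$. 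So I must be misreading: $X_*(T)^\CP$ for a single block is \emph{not} finite, because Construction \ref{ZlambdaCons} produces a submodule $M\subset H_{n,m}$, and $H_{n,m}$ contains $\pi$-power multiples of its basis vectors, so the semi-module condition "$C$ meets each residue class in exactly one element" genuinely pins down $C$ only up to nothing — rather, the point is that $C_\lambda$ determines $\lambda$ but the \emph{isomorphism class of the ungraded} $1$-truncated module only sees $C$ up to simultaneous shift by $h$, which is where the $\CZ$-orbits come in.

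Given that subtlety, here is the clean plan. First, establish the single-block lemma: the composite $\lambda\mapsto C_\lambda\mapsto(\text{ungraded iso. class of }Z_{C_\lambda})$ has fibers which are exactly the $X_*(\CZ)$-orbits (this is essentially the content of Subsection \ref{TruncClassSect} together with Proposition \ref{ShortClassification1}: shifting $C$ by $h$ corresponds to adding $(1,\hdots,1)$ to $\lambda$ and does not change the ungraded $1$-truncated module since it is just a renumbering of a cyclic structure); since there are only finitely many ungraded $1$-truncated Dieudonn\'e modules of numerical type $(n,n+m)$ (e.g. by the classification in Subsection \ref{TruncClassSect}, they are parametrized by $\leftexp{I}{W}$, a finite set), the single-block case follows. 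Then, for general $\CP$, observe that $X_*(T)^\CP$ sits inside $\prod_i X_*(T_i)^{\CP_i}$ — actually it maps to each block's set and the block-diagonal nature of $x_\CP$, $\mu$ shows $\lambda\in X_*(T)^\CP$ iff each $\lambda^i$ lies in $X_*(T_i)^{\CP_i}$ modulo the reordering $\eta_\lambda$; since $\eta_\lambda\in W_H$ ranges over a finite set and conjugating $W\epsilon^\mu W$ by a permutation in $W_H$ only permutes coordinates within blocks, the condition on $(\lambda^1,\hdots,\lambda^N)$ is a finite union of block-wise conditions, each of which by the single-block lemma is a finite union of $X_*(\CZ_i)$-cosets. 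Since $X_*(\CZ)=\prod_i X_*(\CZ_i)$, this exhibits $X_*(T)^\CP$ as a finite union of $X_*(\CZ)$-cosets, which is exactly the assertion. The main obstacle I anticipate is the bookkeeping in the reduction from arbitrary $\CP$ to the single-block case, specifically checking that the interaction between $\eta_\lambda$ and the Iwahori double-coset / $W\epsilon^\mu W$ conditions really does decouple along blocks up to finitely many choices; once that is pinned down, the finiteness is immediate from the single-block statement and the product decomposition of the center.
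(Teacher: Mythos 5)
Your reduction to a single block and the identification of $X_*(T)^\CP$ with the set of beginnings of semi-modules of type $(n,m)$ (via Propositions \ref{ShortClassification1} and \ref{ShortClassification2}) are exactly the paper's first two steps; also, your worry about $\eta_\lambda$ in the assembly step is unfounded, since $\eta_\lambda$ does not enter the definition of $X_*(T)^\CP$ at all: $\epsilon^{-\lambda}x_\CP\epsilon^\lambda$ is block-diagonal, membership in $W\epsilon^\mu W$ just says that the matrix is monomial with entries in $\{0,1,\epsilon\}$, and the count of $\epsilon$'s in each block is forced by the valuation of its determinant, so the condition decouples block by block. The genuine gap is in your single-block argument. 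You assert that the fibres of $\lambda\mapsto(\text{ungraded iso.\ class of }Z_\lambda)$ are \emph{exactly} the $X_*(\CZ)$-orbits, but you only justify the easy inclusion (adding $(1,\dots,1)$ to $\lambda$ shifts $C$ by $h$, and $e_j\mapsto e_{j+h}$ is an ungraded isomorphism $Z_C\cong Z_{C+h}$). The reverse inclusion is false: for any beginning $C$ the translate $C+1$ is again a beginning, and $e_j\mapsto e_{j+1}$ gives an ungraded isomorphism $Z_C\cong Z_{C+1}$, yet the corresponding cocharacters never differ by an element of $X_*(\CZ)$ when $h\geq 2$ (if $C+1=C+ih$ and $c_r$ denotes the element of $C$ in the residue class $r$ mod $h$, then $c_r+1=c_{r+1}+ih$ for all $r$; summing over $r$ gives $ih=1$). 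Concretely, for $(n,m)=(1,1)$ every beginning is $\{a,a+1\}$, all $Z_{\{a,a+1\}}$ are isomorphic as ungraded modules, and this single fibre splits into two $X_*(\CZ)$-orbits.

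Because the fibres are strictly larger than the orbits, knowing that the target is finite (finitely many classes, indexed by $\leftexp{I}{W}$) only gives finitely many fibres; you would still need each fibre to be a \emph{finite} union of orbits, and that is essentially the statement you set out to prove, so the argument is circular at this point. The missing input is precisely the finiteness, up to translation, of the set of beginnings of semi-modules of type $(n,m)$, which the paper imports from \cite[6.3]{JOPurity}. If you want to avoid the citation you could prove it directly (normalize $\min C=0$ and use the condition that $i\in C$ implies $i+n\in C$ or $i-m\in C$ to confine $C$ to a window of length bounded in terms of $n$ and $m$), but some such argument must be supplied; the detour through ungraded isomorphism classes does not replace it.
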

\begin{proof}
  By looking at each block of $H$ separately, we assume that $\CP=(n/(n+m))$ for coprime non-negative integers $n$ and $m$. Via Propositions \ref{ShortClassification1} and \ref{ShortClassification2}, the set $X_*(T)^\CP$ can be identified with the set of beginnings $C$ of semimodules of type $(n,m)$. Under this identification, an element $i\in X_*(\CZ)\cong \BZ$ sends $C\subset \BZ$ to $C+i$. In this form the claim is \cite[6.3]{JOPurity}.
\end{proof}
\section{Non-emptiness of certain affine Deligne-Lusztig varieties}
\label{DLSection}
Fix $0\leq d\leq h$. For $x\in \tilde W$ and $b\in G(L)$, we consider the associated affine Deligne-Lusztig variety (c.f. Rapoport \cite{RapoportGuide}), which is the following set:
\begin{equation*}
  X_{x}(b)\defeq \{g\CI \in G(L)/\CI \mid g^{-1}b\sigma(g) \in \CI x \CI\}
\end{equation*}

From Theorem \ref{MainTheorem} we get the follwing criterion for the non-emptiness of certain of the $X_x(b)$. Here we use again the objects defined in Section \ref{MainThmSect} with respect to the given Newton polygon $\CP$. In case the Newton polygon $\CP$ has a single slope, a different such criterion was previously given by G\"ortz, He and Nie in \cite{GHN}.
\begin{theorem}
  Let $x\in  W\epsilon^\mu W$ and $b\in G(\CO)\epsilon^\mu G(\CO)$. Let $\CP$ the Newton polygon of the Dieudonn\'e module $M_b$. The following are equivalent:
  \begin{enumerate}[(i)]
  \item The set $X_x(b)$ is non-empty.
  \item There exist $\lambda\in X_*(T)$ satisfying $\epsilon^{-\lambda}x_\CP\epsilon^\lambda\in W\epsilon^\mu W$ and $y \in W$ such that $$x\in \CI y\CI \eta_\lambda^{-1} \epsilon^{-\lambda}x_\CP\epsilon^\lambda\eta_\lambda \CI y^{-1} \CI.$$ 
  \end{enumerate}
\end{theorem}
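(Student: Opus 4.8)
The plan is to reduce this statement about non-emptiness of $X_x(b)$ to the already-proven Theorem \ref{MainTheorem}. The key translation is the dictionary between affine Deligne-Lusztig varieties and isomorphism classes of $1$-truncated Dieudonn\'e modules: since $b\in G(\CO)\epsilon^\mu G(\CO)$, the Dieudonn\'e module $M_b=(\CO^h,b\sigma)$ has Hodge polygon given by $\mu$, and its truncation $M_b/\epsilon M_b$ is a $1$-truncated Dieudonn\'e module of numerical type $(d,h)$, hence isomorphic to $Z_w$ for a unique $w\in \leftexp{I}{W}$ by the classification recalled in Subsection \ref{TruncClassSect}. First I would record that $X_x(b)$ is non-empty if and only if $b$ is $G(L)$-$\sigma$-conjugate to an element of $\CI x \CI$; and since $x\in W\epsilon^\mu W\subset G(\CO)\epsilon^\mu G(\CO)$, I would argue (using that both $b$ and elements of $\CI x\CI$ have the same image of the Hodge slope and lie in the same $\sigma$-conjugacy-stable subset) that this $G(L)$-$\sigma$-conjugacy can be taken within $G(\CO)$; this is the standard reduction for minuscule $\mu$.

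Next I would connect the two sides. By \cite[Theorem 1.1]{ViehmannTruncations}, every element of $\CI x \CI$ (for $x\in W\epsilon^\mu W$, in particular for $x$ of the form $ww_0w_{0,I}\epsilon^\mu$) is $G(\CO)$-$\sigma$-conjugate to an element of $G(\CO)_1 x G(\CO)_1$, so the truncation $M_x/\epsilon M_x$ depends only on the double coset $G(\CO)_1\backslash \CI x\CI/G(\CO)_1$ and in fact only on the class of $x$ in the orbit set classifying $1$-truncated Dieudonn\'e modules. Thus $X_x(b)\neq\emptyset$ is equivalent to: $b$ is $G(\CO)$-$\sigma$-conjugate to an element of $\CI x\CI$. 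Writing the truncation $M_b/\epsilon M_b\cong Z_w$, I would show $X_x(b)\neq\emptyset$ holds for \emph{some} $x\in W\epsilon^\mu W$ with truncation type $w$ precisely when a lift of $Z_w$ with matrix of Frobenius in $\CI x\CI$ exists — but one must be careful, because the statement fixes $x$, not just its truncation type. The cleaner route is: $X_x(b)\neq\emptyset$ iff there is a $\CO$-basis of $M_b$ with respect to which the matrix of $F$ lies in $\CI x\CI$; and then, since $x\in W\epsilon^\mu W$ can be replaced (using \cite[Theorem 1.1]{ViehmannTruncations} again, applied now to rewrite $\CI x\CI$ up to $\sigma$-conjugacy) by $w'w_0w_{0,I}\epsilon^\mu$ where $Z_{w'}\cong M_b/\epsilon M_b$, one reduces to asking whether $M_b$, equivalently $Z_w$, lifts — this is exactly condition $(i)$ of Theorem \ref{MainTheorem} with $Z_w = M_b/\epsilon M_b$.

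Concretely, I would then invoke Theorem \ref{MainTheorem}: condition $(i)$ there (that $Z_w$ admits a lift with Newton polygon $\CP$, where here $\CP$ is the Newton polygon of $M_b$) is equivalent to its condition $(iv)$, namely the existence of $\lambda\in X_*(T)$ with $\epsilon^{-\lambda}x_\CP\epsilon^\lambda\in W\epsilon^\mu W$ and $y\in W$ such that $ww_0w_{0,I}\epsilon^\mu\in \CI y\CI \eta_\lambda^{-1}\epsilon^{-\lambda}x_\CP\epsilon^\lambda\eta_\lambda \CI y^{-1}\CI$. The remaining point is to identify the roles of $x$ and of $ww_0w_{0,I}\epsilon^\mu$: since $X_x(b)\neq\emptyset$ and $x$ has the same truncation type as $b$, one shows $X_x(b)\neq\emptyset \iff X_{ww_0w_{0,I}\epsilon^\mu}(b)\neq\emptyset$ — this is again \cite[Theorem 1.1]{ViehmannTruncations}, which says all elements of $\CI x'\CI$ for $x'$ in a fixed truncation orbit are $\sigma$-conjugate into a common small double coset, so $\CI x\CI$ and $\CI ww_0w_{0,I}\epsilon^\mu\CI$ meet the same $G(\CO)$-$\sigma$-conjugacy classes. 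Combining, $X_x(b)\neq\emptyset$ is equivalent to the displayed condition $(ii)$.

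The main obstacle I anticipate is the bookkeeping around replacing $x$ by $ww_0w_{0,I}\epsilon^\mu$: one must verify carefully that the hypothesis $x\in W\epsilon^\mu W$ together with $b\in G(\CO)\epsilon^\mu G(\CO)$ indeed forces $M_b/\epsilon M_b$ and $M_x/\epsilon M_x$ to have the same numerical type $(d,h)$ (so that the same $w$ governs both), and that the passage between $G(L)$- and $G(\CO)$-$\sigma$-conjugacy for minuscule $\mu$ is legitimate — the latter is standard but deserves a sentence or a citation. Once these reductions are in place, the theorem is a direct consequence of Theorem \ref{MainTheorem}, since condition $(ii)$ here is literally condition $(iv)$ there with $\CP$ taken to be the Newton polygon of $M_b$.
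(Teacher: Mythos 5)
There is a genuine gap, and it sits exactly where you flagged your own unease: the passage between $G(L)$- and $G(\CO)$-$\sigma$-conjugacy. Your ``standard reduction for minuscule $\mu$'' --- that $X_x(b)\neq\emptyset$ iff $b$ is $G(\CO)$-$\sigma$-conjugate to an element of $\CI x\CI$, equivalently iff the matrix of $F$ on $M_b$ itself lies in $\CI x\CI$ for some $\CO$-basis --- is false. Non-emptiness of $X_x(b)$ only says that some $h\in\CI x\CI$ is $\sigma$-conjugate to $b$ over $G(L)$, i.e.\ that $M_h$ is \emph{isogenous} to $M_b$ (same Newton polygon $\CP$), not isomorphic to it. A single isogeny class contains many isomorphism classes of Dieudonn\'e modules and many truncation types --- determining which ones is the entire content of the paper --- so there is no reason for $M_b$ itself to admit a basis with Frobenius matrix in $\CI x\CI$, nor for $x$ to ``have the same truncation type as $b$'' as you later assert. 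This error propagates: you end up applying Theorem \ref{MainTheorem} with $Z_w=M_b/\epsilon M_b$, for which condition (i) (``$Z_w$ admits a lift with Newton polygon $\CP$'') is trivially satisfied by $M_b$ itself, so your reduction says nothing about $x$ and the argument becomes circular. In the theorem, $b$ enters only through $\CP$, and the truncation type that matters is that of the elements of $\CI x\CI$, not of $b$.

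The paper's proof keeps the two conjugacy relations separate. For (i)$\Rightarrow$(ii): from $g\CI\in X_x(b)$ one gets $h\in\CI x\CI$ with $M_h$ of Newton polygon $\CP$ and Hodge polygon $\mu$; Construction \ref{FilRedCons2} puts a compatible filtration with Newton polygon $\CP$ on $M_h/\epsilon M_h$, Theorem \ref{CompFlagExists} then $G(\CO)$-$\sigma$-conjugates $h$ into $\CI\eta_\lambda^{-1}\epsilon^{-\lambda}x_\CP\epsilon^\lambda\eta_\lambda\CI$, and writing the conjugating element in some $\CI y\CI$ gives (ii). For (ii)$\Rightarrow$(i) the key step missing from your proposal is this: the element $h\in\CI x\CI$ produced by (ii) need not have Newton polygon $\CP$; one uses Theorem \ref{MainTheorem} (via Construction \ref{LiftCons1}) to build a lift $M$ of $M_h/\epsilon M_h$ which does have Newton polygon $\CP$, observes via \cite[Theorem 1.1]{ViehmannTruncations} that its Frobenius matrix $h'$ lies in $G(\CO)_1 hG(\CO)_1\subset\CI x\CI$, and only then invokes $G(L)$-$\sigma$-conjugacy of $h'$ and $b$ (equal Newton polygons) to conclude $X_x(b)\neq\emptyset$. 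Your argument cannot be repaired by bookkeeping alone: it needs this replacement of $h$ by $h'$, and it must abandon the identification of $w$ with the truncation type of $b$.
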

\begin{proof}
  $(i)\implies (ii)$: Let $g\CI \in X_x(b)$ and $h\defeq gb\sigma(g^{-1})\in \CI x\CI$. Since $x\in W\epsilon^\mu W$ we obtain a Dieudonn\'e module $M_h$ with Hodge polygon given by $\mu$ and Newton polygon $\CP$. Hence by Theroem \ref{MainTheorem} there exists a compatible filtration with Newton polygon $\CP$ on $M_h/\epsilon M_h$. Hence by Theorem \ref{CompFlagExists} applied to $M=M_h$ there exist $\lambda$ as in $(ii)$ and $r\in G(\CO)$ such that $r h\sigma(r)^{-1}\in \CI \eta_\lambda \epsilon^{-\lambda}x_\CP\epsilon^\lambda \eta_\lambda \CI$. Using $G(\CO)=\coprod_{y\in W}\CI w\CI$ this proves $(ii)$.

$(ii)\implies (i)$: By $(ii)$ there exists an element $h\in \CI x\CI$ which is $G(\CO)$-$\sigma$-conjugate to an element of $\CI \eta_\lambda^{-1} \epsilon^{-\lambda}x_\CP\epsilon^\lambda\eta_\lambda \CI$. Hence by Theorem \ref{MainTheorem} the $1$-truncated Dieudonn\'e module $Z\defeq M_h/\epsilon M_h$ has a lift $M$ with Newton polygon $\CP$. Since $M$ and $M_h$ have the same truncation, as discussed in Subsection \ref{TruncClassSect} the matrix $h'$ of $F\colon M\to M$ with respect to a suitable basis lies in $G(\CO)_1h G(\CO)_1$. Since $G(\CO)_1\subset \CI$ we have $h'\in \CI x\CI$. Since $M_{h'}\cong M$ has Newton polygon $\CP$ there exists $g\in G(L)$ such that $g^{-1}b\sigma(g)=h'\in \CI x\CI$. Thus $g\CI \in X_x(b)$.
\end{proof}

\bibliography{references}
\bibliographystyle{alphanum}

\end{document}